\newtheorem{theorem}{Theorem}
\newtheorem{corollary}[theorem]{Corollary}
\newtheorem{lemma}[theorem]{Lemma}
\newtheorem{proposition}[theorem]{Proposition}
\theoremstyle{definition}
\newcommand{\msc}[1]{\href{https://mathscinet.ams.org/mathscinet/search/mscbrowse.html?sk=default&sk=#1&submit=Chercher}{#1}}
\newcommand{\R}{{\mathbb R}}
\renewcommand{\S}{{\mathbb S}}
\newcommand{\be}[1]{\begin{equation}\label{#1}}
\newcommand{\ee}{\end{equation}}
\renewcommand{\(}{\left(}
\renewcommand{\)}{\right)}
\newcommand{\dx}{\mathrm{d}x}
\newcommand{\dt}{\mathrm{d}t}
\newcommand{\rd}{\mathrm{d}}
\newcommand{\ird}[1]{\int_{\R^d}{#1}\,\dx}
\newcommand{\nrm}[2]{\left\|{#1}\right\|_{#2}}
\newcommand{\DD}{\mathsf D_\alpha\kern0.5pt}
\newcommand{\DDstar}{\mathsf D_\alpha^*\kern0.5pt}
\renewcommand{\L}{{\mathcal L_\alpha\,}}
\newcommand{\nrmL}[2]{\|{#1}\|_{#2}}
\newcommand{\C}{{\mathsf C}}
\newcommand{\irdsph}[3]{\int_0^\infty\kern-5pt\int_{\S^{d-1}}#2\;{#1}^{\kern1pt #3}\,\frac{d{#1}}{{#1}}\;d\omega}
\newcommand{\D}[1]{\mathsf D_\alpha\kern1pt#1}
\newcommand{\muscal}{\lambda}
\newcommand{\mB}{\mathcal B}
\newcommand{\Mstar}{\mathcal M}
\newcommand{\LL}{\mathrm{L}}
\renewcommand{\a}{\mathrm{a}}
\renewcommand{\c}{\mathscr{C}_\star}
\title[Stability and improved decay rates]{Constructive stability results in interpolation inequalities and explicit improvements of decay rates of fast diffusion equations} 
\author[M.~Bonforte, J.~Dolbeault, B.~Nazaret and N.~Simonov]{}
\email{matteo.bonforte@uam.es}
\email{dolbeaul@ceremade.dauphine.fr}
\email{bruno.nazaret@univ-paris1.fr}
\email{simonov.nikita@gmail.com}
\thanks{$^*$ Corresponding author: J.~Dolbeault}
\begin{document}

\maketitle
\thispagestyle{empty}

\centerline{\scshape Matteo Bonforte}
\smallskip
{\footnotesize\centerline{Departamento de Matem\'{a}ticas}
\centerline{Universidad Aut\'{o}noma de Madrid, and ICMAT}
\centerline{Campus de Cantoblanco, 28049 Madrid, Spain}}

\medskip

\centerline{\scshape Jean Dolbeault}
\smallskip
{\footnotesize\centerline{Ceremade, UMR CNRS n$^\circ$~7534}
\centerline{Universit\'e Paris-Dau\-phine, PSL Research University}
\centerline{Place de Lattre de Tassigny, 75775 Paris Cedex~16, France}}

\medskip

\centerline{\scshape Bruno Nazaret}
\smallskip
{\footnotesize\centerline{SAMM (EA 4543), FP2M (FR CNRS 2036)}
\centerline{Universit\'e Paris 1}
\centerline{90, rue de Tolbiac, 75634 Paris Cedex~13}}

\medskip

\centerline{\scshape Nikita Simonov}
\smallskip
{\footnotesize\centerline{LaMME, UMR CNRS n$^\circ$~8071}
\centerline{Universit\'e d'\'Evry Val d’Essonne}
\centerline{23, boulevard de France, 91037 \'Evry, France}}

\bigskip


\begin{abstract} We provide a scheme of a recent stability result for a family of Gagliardo-Nirenberg-Sobolev (GNS) inequalities, which is equivalent to an improved entropy -- entropy production inequality associated with an appropriate fast diffusion equation (FDE) written in self-similar variables. This result can be rephrased as an improved decay rate of the entropy of the solution of (FDE) for well prepared initial data. There is a family of Caffarelli-Kohn-Nirenberg (CKN) inequalities which has a very similar structure. When the exponents are in a range for which the optimal functions for (CKN) are radially symmetric, we investigate how the methods for (GNS) can be extended to (CKN). In particular, we prove that the solutions of the evolution equation associated to (CKN) also satisfy an improved decay rate of the entropy, after an explicit delay. However, the improved rate is obtained without assuming that initial data are well prepared, which is a major difference with the (GNS) case.
\end{abstract}

\begin{center}\emph{It is with great pleasure that we dedicate this paper to Juan Luis V\'azquez\\ on the occasion of his 75$^{th}$ birthday.} \end{center}

\bigskip

\begin{center}\parbox{10.2cm}{

\scriptsize\noindent{\sc MSC 2020}. {Primary: \msc{26D10}, \msc{46E35}, \msc{35K55}; Secondary: \msc{35B40}, \msc{49K20}.}
\medskip

{\sc Keywords}. Gagliardo-Nirenberg-Sobolev inequality, Caffarelli-Kohn-Nirenberg inequality, stability, entropy methods, fast diffusion equation, Harnack Principle, self-similar solutions, Hardy-Poincar\'e inequalities, spectral gap, rates of convergence.

}\end{center}\newpage


\section{Introduction and main results}

Let us start with some Gagliardo-Nirenberg-Sobolev inequalities (without weights) and related flow issues in Section~\ref{Sec:introGNS} before extending the results to Caffarelli-Kohn-Nirenberg inequalities (with weights)  and related flows in Section~\ref{Sec:introCKN}.

\medskip\subsection{Gagliardo-Nirenberg-Sobolev inequalities and related flows}\label{Sec:introGNS}~

The \emph{Gagliardo-Nirenberg-Sobolev inequality}
\begin{equation}\label{GNS}
\nrm{\nabla f}2^\theta\,\nrm f{p+1}^{1-\theta}\ge\mathcal C_{\mathrm{GNS}}\,\nrm f{2\,p}
\end{equation}
holds on the space of the functions $f\in\mathrm L^{p+1}(\mathbb R^d)$ with $\nabla f\in\mathrm L^2(\mathbb R^d)$, with exponents given by
\[
\theta=\tfrac{d\,(p-1)}{(d+2-p\,(d-2))\,p}\,,\quad p\in(1,+\infty)\;\mbox{if}\;d=1\;\mbox{or}\;2\,,\quad p\in(1,p_\star]\;\mbox{if}\;d\ge3\,,\quad p_\star=\tfrac d{d-2}\,.
\]
According to~\cite{Gunson91,DelPino2002}, equality in~\eqref{GNS} is achieved if and only if $f$ is equal to
\be{Aubin-TalentiGNS}
\mathsf g(x)=\big(1+|x|^2\big)^{-\frac1{p-1}}\quad\forall\,x\in\R^d\,,
\ee
up to a multiplication by a constant, a translation and a scaling. We denote by~$\mathfrak M$ the manifold of optimal functions for~\eqref{GNS}. This inequality has a number of interesting limit cases: Sobolev's inequality if $d\ge3$ and $p=p_\star$, the Euclidean Onofri inequality if $d=2$ in the limit as $p\to+\infty$, and the scale invariant Euclidean logarithmic Sobolev inequality in the limit as $p\to1_+$. Let us define the \emph{deficit functional}
\[\label{deficit}
\delta[f]:=(p-1)^2\,\nrm{\nabla f}2^2+4\,\tfrac{d-p\,(d-2)}{p+1}\,\nrm f{p+1}^{p+1}-\mathcal K_{\mathrm{GNS}}\,\nrm f{2\,p}^{2\,p\,\chi}
\]
with $\chi=\frac{d+2-p\,(d-2)}{d-p\,(d-4)}$ and $\mathcal K_{\mathrm{GNS}}$ chosen so that $\delta[\mathsf g]=0$. Up to a scaling, the inequality $\delta[f]\ge0$ is equivalent to~\eqref{GNS}, as was shown in~\cite{DelPino2002}.

In the critical case $p=p_\star$, $d\ge3$, optimal functions in~\eqref{GNS} are known as the \emph{Aubin-Talenti functions} and this result goes back to~\cite{Aubin-76,Talenti-76,rodemich1966sobolev}. Later, in~\cite{MR790771}, H.~Brezis and E.H.~Lieb asked the next natural question: which distance to $\mathfrak M$ is controlled by the deficit $\delta$ ? Soon after, an answer was given in~\cite{MR1124290} by G.~Bianchi and H.~Egnell: there is a positive constant~$\mathcal C_{\mathrm{BE}}$ such that
\[
\tfrac1{(p-1)^2}\,\delta[f]=\nrmL{\nabla f}2^2-\mathsf S_d\,\nrmL f{2^*}^2\ge\mathcal C_{\mathrm{BE}}\,\inf_{\varphi\in\mathfrak M}\,\nrmL{\nabla f-\nabla\varphi}2^2\,,
\]
where $\mathsf S_d$ is the optimal constant in Sobolev's inequality. This striking result had anyway an important drawback: $\mathcal C_{\mathrm{BE}}$ is obtained by a \emph{non-constructive} method. Various extensions and improvements as, \emph{e.g.}, in~\cite{Cianchi2009,figalli2020sharp} have been obtained, as well as similar results for~\eqref{GNS} in~\cite{Carlen2013,MR3695890}, in the subcritical range: we refer to~\cite{BDNS2021} for a review of the literature. New \emph{constructive stability results} were recently obtained:
\begin{theorem}\label{Thm:BDNS}\cite[Corollary~5.4 and Theorem~6.1]{BDNS2021}\label{Thm:stabilityBDNS} Let $d\ge3$ and $p\in(1,p_\star]$, or $d=1$,~$2$ and $p\in(1,+\infty)$. For any $f\in\mathrm L^{2p}(\mathbb R^d)$ with $\nabla f\in\mathrm L^2(\mathbb R^d)$ and
\[
A:=\sup_{r>0}r^\frac{d-p\,(d-4)}{p-1}\int_{|x|>r}|f|^{2p}\,\dx<\infty\,,
\]
we have the estimate
\be{Ineq:StabFisher}
\delta[f]\ge\mathcal C\,\inf_{\varphi\in\mathfrak M}\ird{\big|(p-1)\,\nabla f+f^p\,\nabla\varphi^{1-p}\big|^2}
\ee
for some explicit $\mathcal C>0$ which depends only on $d$, $p$, $\nrm f{2p}$ and~$A$. 
\end{theorem}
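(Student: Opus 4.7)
The plan is to reformulate the inequality as a quantitative entropy -- entropy production estimate for the fast diffusion equation (FDE) written in self-similar variables, and then to split the analysis into a linearized regime near the manifold $\mathfrak M$ and an outer regime from which the FDE flow enters the linearized regime after a controlled time. Setting $u=f^{2p}$, the deficit $\delta[f]$ is, up to an optimal scaling of $f$, proportional to the entropy -- entropy production gap $\mathcal I[u]-2\,\mathcal F[u|\mathsf g^{2p}]$, where $\mathcal F[\cdot|\mathsf g^{2p}]$ is the relative R\'enyi entropy with respect to the Barenblatt profile $\mathsf g^{2p}$ and $\mathcal I$ is the associated Fisher information. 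The right-hand side of~\eqref{Ineq:StabFisher} is, up to a constant, the relative Fisher information $\mathcal I[u|\varphi^{2p}]$ at the optimal $\varphi\in\mathfrak M$, so the task reduces to an improved entropy -- entropy production inequality of the form $\mathcal I[u]\ge 2\,\mathcal F[u|\mathsf g^{2p}]+\mathcal C\,\inf_{\varphi\in\mathfrak M}\mathcal I[u|\varphi^{2p}]$.

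Near $\mathfrak M$, I would linearize: write $u=\varphi^{2p}(1+\varepsilon\,h)$ for a best-adapted $\varphi\in\mathfrak M$, with $h$ orthogonal to the tangent space to $\mathfrak M$ (generated by translations, dilations and constants). The leading-order quadratic form in $h$ is controlled by a Hardy--Poincar\'e inequality on the weighted measure $\varphi^{2p}\,\dx$, whose spectral gap is explicit from the Aubin--Talenti geometry, and the improved ratio between Fisher information and entropy comes from the gap between the first non-trivial Hardy--Poincar\'e eigenvalue and the scaling eigenvalue. This produces~\eqref{Ineq:StabFisher} with an explicit constant, provided the relative entropy and a weighted $\mathrm L^\infty$ distance of $u$ to $\varphi^{2p}$ are controlled.

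The heart of the proof is a quantitative reduction to this linearized regime. I would solve the FDE in self-similar variables starting from $u$: the relative entropy is monotone non-increasing with derivative $-\mathcal I$, and the tail bound $A<\infty$ combined with the $\mathrm L^{2p}$ control propagates along the flow. The regularizing effect of the FDE, through Moser iteration and V\'azquez's global Harnack principle for Barenblatt-type profiles, yields uniform two-sided bounds $c_1\,\mathsf g^{2p}\le u(t,\cdot)\le c_2\,\mathsf g^{2p}$ for $t\ge t_\star$, with $t_\star$, $c_1$, $c_2$ depending only on $d$, $p$, $\nrm f{2p}$ and $A$. Csisz\'ar--Kullback--type estimates then convert the decay of the relative entropy into pointwise/weighted $\mathrm L^2$ closeness to $\varphi^{2p}$, which is precisely what the linearization step requires.

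Combining the two regimes with the monotonicity of $t\mapsto\mathcal F[u(t,\cdot)|\mathsf g^{2p}]$ along the flow gives an improved decay rate $\mathcal I[u(t,\cdot)]\ge 2\,(1+\eta)\,\mathcal F[u(t,\cdot)|\mathsf g^{2p}]$ with explicit $\eta>0$, which, integrated along the flow and read back at $t=0$, is equivalent to~\eqref{Ineq:StabFisher}. The main obstacle is the quantitative reduction step: one must track the dependence of the Harnack constants and of the threshold time $t_\star$ on the tail parameter $A$, and calibrate the uniform two-sided bounds to a scale on which the Hardy--Poincar\'e spectral gap still governs the dynamics — delicate because the flow in self-similar variables has a singular weight coming from the Barenblatt profile and the tangent-space projection must be performed at the best-adapted $\varphi$, which itself moves with~$t$.
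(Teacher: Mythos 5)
Your overall route coincides with the paper's: rewrite $\delta[f]$ as the entropy--entropy-production gap $\mathcal I[v]-4\,\mathcal F[v]$ for $v=|f|^{2p}$ (your factor $2$ should be $4$ in the paper's normalization), linearize near the Aubin--Talenti manifold and invoke a Hardy--Poincar\'e spectral gap under suitable orthogonality constraints, and reach the linearized regime by running the fast diffusion flow in self-similar variables, using the global Harnack principle and parabolic regularity to obtain uniform convergence in relative error after a threshold time $t_\star$ controlled by $d$, $p$, $\nrm f{2p}$ and~$A$. All of that matches Steps~0--1 of the paper's scheme.

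The gap is in your last step, where you say that combining the improved gap for $t\ge t_\star$ with ``the monotonicity of $t\mapsto\mathcal F[u(t,\cdot)]$'' allows you to ``read back at $t=0$''. Monotonicity of $\mathcal F$ is not the mechanism that transfers the improvement from the asymptotic time layer $[t_\star,\infty)$ back to the initial datum. Knowing only $\mathcal F'=-\mathcal I$, $\mathcal I\ge 4\,\mathcal F$, and $\mathcal I\ge(4+\zeta)\,\mathcal F$ for $t\ge t_\star$ does not preclude $\mathcal I[v_0]=4\,\mathcal F[v_0]$ exactly; one could a priori have the quotient $\mathcal Q:=\mathcal I/\mathcal F$ equal to $4$ on $[0,t_\star)$ and jump to $4+\zeta$ at $t_\star$. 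What excludes this, and is the genuine extra input, is the \emph{carr\'e du champ} estimate $\tfrac{\rd}{\dt}\mathcal I[v(t,\cdot)]\le -4\,\mathcal I[v(t,\cdot)]$ along the self-similar flow. Combined with $\mathcal F'=-\mathcal I$, it yields the Riccati-type differential inequality $\tfrac{\rd}{\dt}\mathcal Q\le \mathcal Q\,(\mathcal Q-4)$, and integrating this \emph{backwards} from $\mathcal Q(t_\star)\ge 4+\zeta$ forces $\mathcal Q(0)\ge 4+\mu$ with $\mu=\tfrac{4\,\zeta\,e^{-4\,t_\star}}{4+\zeta-\zeta\,e^{-4\,t_\star}}$, which is exactly what produces the explicit constant $\mathcal C$. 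Without this estimate (or some substitute controlling the Fisher information's dissipation rate), the argument does not close at $t=0$. You should also flag that in the critical case $p=p_\star$ the spectral gap under mass and center-of-mass constraints alone is insufficient, and an additional time-dependent rescaling (second moment normalization) is required, which introduces a further time shift in the argument.
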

\noindent As a function of $f$ or, to be precise, as a function of the mass $\nrm f{2p}$ and $A$, $\mathcal C$ takes positive values on $\mathfrak M$ even if it is not uniformly bounded away from $0$ on $\mathfrak M$. The distance to $\mathfrak M$ is measured by a \emph{Fisher information} functional and the strategy of the proof involves \emph{entropy} methods. Inequality~\eqref{Ineq:StabFisher} is equivalent to an improved \emph{entropy -- entropy production inequality}, which relates the Fisher information with a \emph{relative entropy}, or \emph{free energy}, defined for $m\in[m_1,1)$ with $m_1:=1-1/d$ by
\be{Entropy}
\mathcal F[v]:=\frac1{m-1}\ird{\(v^m-\mathcal B^m-m\,\mathcal B^{m-1}\,(v-\mathcal B)\)}
\ee
where
\be{BarenblattGNS}
\mathcal B(x):=\big(1+|x|^2\big)^\frac1{m-1}\quad\forall\,x\in\R^d\,.
\ee
The functional $\mathcal F$ enters in the study of nonlinear evolution equations as follows. Let us consider the \emph{fast diffusion equation}
\be{FDr}
\frac{\partial v}{\partial t}+\nabla\cdot\(v\,\nabla v^{m-1}\)=2\,\nabla\cdot(x\,v)\,,\quad v(t=0,\cdot)=v_0\,.
\ee
By a standard computation, a solution $v$ of~\eqref{FDr} is such that
\be{Fisher}
\frac \rd{\dt}\mathcal F[v(t,\cdot)]=-\,\mathcal I[v(t,\cdot)]\;\mbox{where}\;\mathcal I[v]:=\frac m{1-m}\ird{v\,\big|\nabla v^{m-1}-\nabla \mathcal{B}^{m-1}\big|^2}
\ee
where $\mathcal I[v]$ is the \emph{relative Fisher information} with respect to $\mathcal B$. The exponents $m$ in~\eqref{FDr} and $p$ in~\eqref{GNS} are related by the condition $p=1/(2\,m-1)$. In practice, for $m>m_1$ (subcritical range), the result of Theorem~\ref{Thm:BDNS} can be rephrased as a result on \emph{decay rates} for the solutions of~\eqref{FDr}.
\begin{corollary}\label{Cor:Improvedrate}\cite[Corollary~5.2]{BDNS2021} Let $d\ge1$ and $m\in(m_1,1)$. If $v$ is a solution of~\eqref{FDr} with nonnegative initial datum $v_0\in\mathrm L^1(\R^d)$ such that $\ird{v_0}=\ird{\mathcal B}$, $\ird{x\,v_0}=0$ and
\[
 A[v_0]:=\sup_{r>0}r^{\frac2{1-m}-d}\int_{|x|>r}v_0\,\dx<\infty\,,
\]
then we have
\be{improved.rate.GNS}
\mathcal F[v(t,.)]\le\mathcal F[v_0]\,e^{-\,(4+\zeta)\,t}\quad\forall\,t\ge0
\ee
for some positive constant $\zeta$ which depends explicitly only on $m$, $d$, and~$A[v_0]$.
\end{corollary}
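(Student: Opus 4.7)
The plan is to deduce the decay estimate from Theorem~\ref{Thm:BDNS} by translating the stability statement into an improved entropy--entropy production inequality and then applying a Grönwall argument along the flow~\eqref{FDr}.

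First, I would rewrite~\eqref{Ineq:StabFisher} in terms of the nonnegative density variable. Using the correspondence $v=f^{2p}$ and $p=1/(2m-1)$, the deficit $\delta[f]$ is, up to a scaling and the normalization $\ird{v}=\ird{\mathcal B}$, a positive multiple of the relative entropy $\mathcal F[v]$ defined in~\eqref{Entropy}, while the Fisher-like integrand $|(p-1)\nabla f + f^p\nabla\varphi^{1-p}|^2$ reduces, when $\varphi=\mathsf g$ is the Aubin--Talenti profile corresponding to $\mathcal B$, to a multiple of the integrand of the relative Fisher information $\mathcal I[v]$ defined in~\eqref{Fisher}. In this way Theorem~\ref{Thm:BDNS} becomes an improved entropy--entropy production inequality
\[
\mathcal I[v]\ge(4+\zeta)\,\mathcal F[v]
\]
which holds for every nonnegative $v$ with $\ird{v}=\ird{\mathcal B}$, $\ird{x\,v}=0$, and $A[v]\le A_0$, with $\zeta=\zeta(m,d,A_0)>0$ explicit. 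The standard inequality $\mathcal I\ge 4\,\mathcal F$ is the usual Bakry--Emery form of~\eqref{GNS}, so the whole point is that the extra $\zeta$ is captured by $\mathcal C$ in Theorem~\ref{Thm:BDNS}.

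Next, I would verify that the three admissibility conditions are preserved along~\eqref{FDr}: mass conservation and vanishing of the first moment follow from integrating~\eqref{FDr} against $1$ and $x$ respectively, using the centred, mass-normalized Barenblatt $\mathcal B$. The non-trivial point is the control of the tail functional $A[v(t,\cdot)]$ for all $t\ge0$ in terms of $A[v_0]$. The natural way to do this is to show that, for $r\ge 1$, say, the tails of $v(t,\cdot)$ can be majorized by the tails of $v_0$ up to a multiplicative constant depending only on $m$ and $d$; this is where global Harnack-type and regularity results for the fast diffusion equation in the subcritical range $m\in(m_1,1)$ (mentioned in the keywords) enter, and this is the step I expect to be the main obstacle.

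Once this uniform tail bound is established, we may choose $A_0$ depending only on $A[v_0]$ so that $A[v(t,\cdot)]\le A_0$ for every $t\ge0$. The improved entropy--entropy production inequality applied along the flow yields, by~\eqref{Fisher},
\[
\frac{\rd}{\dt}\mathcal F[v(t,\cdot)] = -\,\mathcal I[v(t,\cdot)] \le -\,(4+\zeta)\,\mathcal F[v(t,\cdot)],
\]
and Grönwall's lemma gives~\eqref{improved.rate.GNS}. The explicit dependence of $\zeta$ on $m$, $d$ and $A[v_0]$ is inherited from the explicit dependence of $\mathcal C$ in Theorem~\ref{Thm:BDNS} combined with the quantitative tail propagation estimate used above.
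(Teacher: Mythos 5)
The central step of your plan—rewriting Theorem~\ref{Thm:BDNS} as the pointwise inequality $\mathcal I[v]\ge(4+\zeta)\,\mathcal F[v]$—does not work, because it reverses the direction of the infimum in~\eqref{Ineq:StabFisher}. Recall from the paper's own \emph{Step~0} that, with $v=f^{2p}$ and the mass normalization, one has $\frac{p+1}{p-1}\,\delta[f]=\mathcal I[v]-4\,\mathcal F[v]$ (so $\delta[f]$ is \emph{not} a multiple of $\mathcal F[v]$, as you claim, but of the excess $\mathcal I[v]-4\,\mathcal F[v]$). Choosing $\varphi=\mathsf g$ in the right-hand side of~\eqref{Ineq:StabFisher} yields, up to an explicit constant, $\mathcal I[v]$; but since this is a \emph{particular choice} in the infimum, it only gives $\inf_{\varphi\in\mathfrak M}\ird{|(p-1)\nabla f+f^p\nabla\varphi^{1-p}|^2}\le\tfrac{1-m}{m}\,\mathcal I[v]$. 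Plugging that into $\delta[f]\ge\mathcal C\cdot\inf_\varphi(\cdots)$ therefore gives no lower bound on $\mathcal I[v]-4\,\mathcal F[v]$ in terms of $\mathcal I[v]$ or $\mathcal F[v]$. In fact the logical order in the paper is exactly the opposite of what you propose: the improved inequality $\mathcal I[v]\ge(4+\zeta)\,\mathcal F[v]$ is established first (Steps~1--3 of Section~\ref{Sec:StabGNS}), and then Theorem~\ref{Thm:BDNS} is obtained from it precisely by replacing $\mathcal I[v]$ by the smaller $\inf_{\varphi}(\cdots)$ in the final step. So trying to recover the improved entropy--entropy production inequality \emph{from} Theorem~\ref{Thm:BDNS} is circular and loses information.

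The paper's route to Corollary~\ref{Cor:Improvedrate} is entirely dynamical and avoids the second difficulty you anticipate (propagating $A[v(t,\cdot)]$ uniformly in time). First, the convergence in relative error of Theorem~\ref{uniform.convergence} produces an explicit threshold time $t_\star$ after which $v(t,\cdot)/\mathcal B$ is $\varepsilon$-close to~$1$ in $\mathrm L^\infty$; on $[t_\star,+\infty)$, the Hardy--Poincar\'e spectral gap of Proposition~\ref{Prop:SpectralGap} (applied to the linearization around $\mathcal B$, with the extra eigenvalue gain coming from the center-of-mass condition) gives~\eqref{improved.intial}, that is $\mathcal I[v(t,\cdot)]\ge(4+\zeta)\,\mathcal F[v(t,\cdot)]$ for $t\ge t_\star$. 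Second, the carr\'e du champ differential inequality of Lemma~\ref{Lem:Q}, namely $\frac{\rd}{\dt}\mathcal Q\le\mathcal Q(\mathcal Q-4)$ for $\mathcal Q=\mathcal I/\mathcal F$, is integrated \emph{backwards} on $[0,t_\star]$ to transfer the improvement, with a smaller explicit constant $\mu$, up to $t=0$. Then $\frac{\rd}{\dt}\mathcal F=-\mathcal I\le-(4+\mu)\,\mathcal F$ holds for all $t\ge0$ and Gr\"onwall concludes, with $\mathcal F[v_0]$ finite by Lemma~\ref{Lem:Entropy-MA}. Your proposal is missing the backward-in-time carr\'e du champ step, which is precisely what allows the rate to hold from $t=0$ and which has no substitute in a purely time-slice-by-time-slice argument based on $A[v(t,\cdot)]$.
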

\noindent The integral $\ird{x\,v_0}$ is finite because $A[v_0]<+\infty$. Note that if the center of mass of $v_0$ is finite, then $\ird{x\,v(t,x)}=\ird{x\,v_0}\,e^{-2t}$ for any $t\ge0$.

Let us consider the \emph{optimized free energy functional}
\[
\mathcal F_\star[v]:=\inf_{B\in\mathfrak B}\frac1{m-1}\ird{\(v^m-B^m-m\,B^{m-1}\,(v-B)\)}
\]
where $\mathfrak B$ is the set of all \emph{Barenblatt} functions obtained from $\mathcal B$ using a multiplication by a constant, translations and scalings.
\begin{corollary}\label{Cor:Improvedrate2} Let $m\in[m_1,1)$ if $d\ge2$, $m\in(1/2,1)$ if $d=1$ and consider $\zeta$ as in~Corollary~\ref{Cor:Improvedrate}. If $v$ is a solution of~\eqref{FDr} with nonnegative initial datum $v_0\in\mathrm L^1(\R^d)$ such that $A[v_0]$ is finite, then
\be{improved.rate.GNS2}
\mathcal F_\star[v(t,.)]\le\mathcal F_\star[v_0]\,e^{-\,(4+\zeta)\,t}\quad\forall\,t\ge0\,.
\ee
\end{corollary}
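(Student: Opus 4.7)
The plan is to deduce Corollary~\ref{Cor:Improvedrate2} from Corollary~\ref{Cor:Improvedrate} by exploiting two complementary invariances. The first is that $\mathcal F_\star$ is invariant under the group $G$ of translations and scalings acting on~$v$: since $G$ is the symmetry group of $\mathfrak B$, the change of variable $B\mapsto \tau^{-1}B$ in the infimum defining $\mathcal F_\star$ gives $\mathcal F_\star[\tau\,v]=\mathcal F_\star[v]$ for every $\tau\in G$. The second is that the equation~\eqref{FDr} possesses the time-dependent translation symmetry $v(t,x)\mapsto v(t,x+y_0\,e^{-2t})$ for every $y_0\in\R^d$: a short computation shows that the extra $-y'(t)\!\cdot\!\nabla v$ produced by the chain rule is compensated by the drift term $2\,\nabla\cdot(xv)$ precisely when $y'(t)=-2\,y(t)$. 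This mirrors the decay $\ird{x\,v(t)}=e^{-2t}\ird{x\,v_0}$ of the center of mass. Scaling, by contrast, is \emph{not} a symmetry of~\eqref{FDr}, which prevents normalizing mass at the level of the equation.

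Given $v_0$ with $A[v_0]<\infty$, I would let $M:=\ird{v_0}$ and $\bar x:=M^{-1}\ird{x\,v_0}$ (finite by $A[v_0]<\infty$) and form the centered solution $\tilde v(t,x):=v(t,x+\bar x\,e^{-2t})$; by the translation symmetry it solves~\eqref{FDr} with $\tilde v_0(x)=v_0(x+\bar x)$ of mass $M$ and vanishing center of mass, and one checks that $A[\tilde v_0]<\infty$. The $G$-invariance of $\mathcal F_\star$ then yields $\mathcal F_\star[v(t,\cdot)]=\mathcal F_\star[\tilde v(t,\cdot)]$ and $\mathcal F_\star[v_0]=\mathcal F_\star[\tilde v_0]$, so that the statement reduces to proving the claim for $\tilde v$. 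I would then apply Corollary~\ref{Cor:Improvedrate}, extended to arbitrary mass via the scale invariance of the underlying GNS inequality, with the optimizer $B^\star=B^\star[\tilde v_0]\in\mathfrak B$ of $B\mapsto\mathcal F_B[\tilde v_0]$ as reference: the Euler--Lagrange equations in the $d+2$ parameters $(c,x_0,\mu)$ characterize $B^\star$ as the Barenblatt matching the mass, center of mass and second moment of $\tilde v_0$, so that $\mathcal F_{B^\star}[\tilde v_0]=\mathcal F_\star[\tilde v_0]$. Combined with the infimum bound $\mathcal F_\star[\tilde v(t)]\leq\mathcal F_{B^\star}[\tilde v(t)]$, the chain
\[
\mathcal F_\star[\tilde v(t)]\;\leq\;\mathcal F_{B^\star}[\tilde v(t)]\;\leq\;\mathcal F_{B^\star}[\tilde v_0]\,e^{-(4+\zeta)t}\;=\;\mathcal F_\star[\tilde v_0]\,e^{-(4+\zeta)t}
\]
would close the argument.

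The main obstacle lies in the middle inequality. In general $B^\star$ matches the second moment of $\tilde v_0$ and therefore does \emph{not} lie on the codimension-one slice $c=\mu^{2/(m-1)}$ of stationary solutions of~\eqref{FDr} inside the two-parameter family $c\,(1+|x|^2/\mu^2)^{1/(m-1)}$ of centered Barenblatts. Only for a \emph{stationary} reference does one have $\nabla B^{m-1}=2x$, which yields the dissipation identity~\eqref{Fisher} underpinning the entropy--entropy production methodology of Corollary~\ref{Cor:Improvedrate}; for a non-stationary $B^\star$ the dissipation identity picks up an extra drift term that has to be controlled. I would address this either by (i) an envelope-theorem argument tracking the evolution of the minimizer $B^\star[\tilde v(t)]$ along the flow---since the derivative of $\mathcal F_B[\tilde v]$ with respect to the parameters of $B$ vanishes at the minimizer, $\tfrac{\rd}{\dt}\mathcal F_\star[\tilde v(t)]$ picks up only the $v$-variation and produces a dissipation that can be compared with the Fisher information at the level of Corollary~\ref{Cor:Improvedrate}---or by (ii) a rescaling that converts $(\tilde v,B^\star)$ into a pair $(\hat v,\mathcal B_{M'})$ of matching parameters with $\mathcal B_{M'}$ stationary, thereby reducing the situation to the extended centered case already handled.
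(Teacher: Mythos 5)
Your high-level plan is essentially the paper's own (translation to center, pass to the best-matching Barenblatt $B^\star$, apply Corollary~\ref{Cor:Improvedrate}, take the infimum), and you correctly identify that the Euler--Lagrange conditions force $B^\star$ to match mass, center of mass \emph{and} second moment, so that $B^\star$ generically does not lie on the slice of \emph{stationary} profiles. However, the ``obstacle'' you flag at that point is created by an incorrect assertion earlier in your argument: you claim that ``scaling is \emph{not} a symmetry of~\eqref{FDr}''. In fact, Equation~\eqref{FDr} \emph{does} have a scaling symmetry, namely
\[
v(t,x)\;\longmapsto\;\sigma_\mu v(t,x):=\mu^{\frac{2}{1-m}}\,v\big(t,\mu x\big)\,,\qquad \mu>0\,,
\]
as one checks directly: $\nabla\cdot\big(\sigma_\mu v\,\nabla(\sigma_\mu v)^{m-1}\big)=\mu^{\frac{2m}{1-m}+2}\big[\nabla\cdot(v\nabla v^{m-1})\big](t,\mu x)$ with $\tfrac{2m}{1-m}+2=\tfrac{2}{1-m}$, which is exactly the same power of $\mu$ multiplying $\partial_t v$ and the drift $2\nabla\cdot(x v)$. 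What is \emph{not} a symmetry is the mass-preserving scaling $v\mapsto\mu^d v(\mu\cdot)$; but the mass-\emph{changing} scaling $\sigma_\mu$ is available. This scaling permutes the stationary profiles (it sends $(C+|x|^2)^{1/(m-1)}$ to $(C/\mu^2+|x|^2)^{1/(m-1)}$), leaves $A[\cdot]$ invariant (and hence leaves $\zeta$ unchanged), and acts on $\mathcal F_\star$ by a multiplicative constant that cancels between the two sides of~\eqref{improved.rate.GNS2}.

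Once this symmetry is recognized, your option~(ii) closes the subcritical case exactly as the paper does: after centering, choose $\mu$ so that $\sigma_\mu B^\star$ is a stationary profile $\mathcal B_C$ (this is a single algebraic equation $\mu^{4/(1-m)}=\lambda^{2/(1-m)}/c$ for your parameters $(c,\lambda)$ of $B^\star$, always solvable). Then $\hat v_0:=\sigma_\mu\tilde v_0$ has zero center of mass, $\ird{\hat v_0}=\ird{\mathcal B_C}$, $A[\hat v_0]=A[v_0]$, and $\mathcal F_\star[\hat v_0]=\mathcal F_{\mathcal B_C}[\hat v_0]$, so the version of Corollary~\ref{Cor:Improvedrate} with reference $\mathcal B_C$ (itself obtained from the stated one by one more application of $\sigma_\mu$) gives
$\mathcal F_\star[\hat v(t)]\le\mathcal F_{\mathcal B_C}[\hat v(t)]\le\mathcal F_{\mathcal B_C}[\hat v_0]\,e^{-(4+\zeta)t}=\mathcal F_\star[\hat v_0]\,e^{-(4+\zeta)t}$, and the result for $v$ follows by pulling back through $\sigma_\mu$. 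There is no extra drift term to control: there is a genuine conjugacy of flows. Your option~(i), the envelope argument with a time-dependent best-matching profile, is \emph{not} needed in the subcritical case, but it is precisely what the paper invokes for the endpoint $m=m_1$, where the spectral gap argument additionally requires the second-moment constraint to be preserved along the evolution; you did not separate these two cases, though the statement of the corollary includes $m=m_1$. A final minor point: $\mathcal F_\star$ is not literally $G$-invariant (it picks up a factor $c^m\lambda^{-d}$ under $v\mapsto c\,v(\lambda\cdot)$), but since both sides of the estimate carry $\mathcal F_\star$, this prefactor is harmless.
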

\noindent In the subcritical range, this result is a straightforward consequence of~Corollary~\ref{Cor:Improvedrate}, but it is new in the critical case $m=m_1$ corresponding to $p=p_\star$.

\medskip The \emph{entropy -- entropy production inequality} relates rates of convergence for the solutions of~\eqref{FDr} with~\eqref{GNS}, but~\eqref{GNS} can also be invoked in the context of the standard \emph{fast diffusion equation}
\be{FD}
\frac{\partial u}{\partial t}=\Delta u^m\,,\quad u(t=0,\cdot)=u_0\,.
\ee
Assume that $m\in[m_1,1)$ if $d\ge2$ and $m\in(1/2,1)$ if $d=1$. Using the \emph{R\'enyi entropy powers} formalism, we learn  from~\cite[Lemma~2.1]{BDNS2021} that a solution~$u$ of~\eqref{FD} with initial datum $u_0\in\mathrm L^1_+\!\(\R^d,(1+|x|^2)\,\mathrm dx\)$ such that $u_0^m\in\mathrm L^1(\R^d)$ satisfies
\be{Ch2:GrowthEntropyGNS}
\ird{u^m(t,x)}\ge\(\(\ird{u_0^m}\)^\frac{m-m_c}{1-m}+\tfrac{(1-m)\,C_0}{m-m_c}\,t\)^\frac{1-m}{m-m_c}\quad\forall\,t\ge0\,,
\ee
for some constant $C_0$ which explicitly involves $\mathcal C_{\mathrm{GNS}}$ and where $m_c:=(d-2)/d$. Equality in~\eqref{Ch2:GrowthEntropyGNS} holds for any $t\ge0$ if and only if $u_0\in\mathfrak B$. At $t=0$, \eqref{Ch2:GrowthEntropyGNS} is an equality for any $u_0$ and we can recover the Gagliardo-Nirenberg-Sobolev inequality~\eqref{GNS} written with the optimal constant by differentiating with respect to $t$ the growth estimate~\eqref{Ch2:GrowthEntropyGNS} at $t=0$. A more readable estimate is obtained by considering the \emph{optimized free energy functional} $\mathcal F_\star$ applied to~\eqref{FD}.
\begin{corollary}\label{Cor:Improvedrate3} Assume that $d\ge1$, $m\in[m_1,1)$ and consider a solution of~\eqref{FD} with initial datum $u_0\in\mathrm L^1_+\!\(\R^d,(1+|x|^2)\,\mathrm dx\)$ such that $u_0^m\in\mathrm L^1(\R^d)$. With $\kappa=\zeta/d$ and~$\zeta$ as in Corollary~\ref{Cor:Improvedrate2}, we have
\[
\mathcal F_\star[u(t,.)]\le\mathcal F_\star[u_0]\,\big(1+d\,(m-m_c)\,t\big)^{-\frac{m+\kappa}{m-m_c}}\quad\forall\,t\ge0\,.
\]
\end{corollary}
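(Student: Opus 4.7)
The plan is to derive Corollary~\ref{Cor:Improvedrate3} from Corollary~\ref{Cor:Improvedrate2} via the classical self-similar change of variables that relates \eqref{FD} to \eqref{FDr}. Given a solution $u$ of \eqref{FD}, I introduce the rescaled function $v$ through
\[
u(t,x) = R(t)^{-d}\, v\bigl(\tau(t),\, R(t)^{-1} x\bigr),
\]
where the scaling factor $R(t) > 0$ and the rescaled time $\tau(t) \ge 0$ are chosen so that $v$ solves \eqref{FDr} with $v_0 = u_0$. A short computation balancing the drift produced by the rescaling against the $2\,\nabla\cdot(xv)$ term in \eqref{FDr} forces $R(0) = 1$, $\tau(0) = 0$, the growth law $R(t)^{d(m-m_c)} = 1 + d(m-m_c)\,t$ (under the natural time normalization matched to the statement), and $\tau(t)$ is a positive multiple of $\log R(t)$.

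Next, I use the fact that $\mathfrak B$ is stable under the rescaling $B \mapsto R^{-d}B(\cdot/R)$: reparameterizing the infimum defining $\mathcal F_\star$ gives the scaling identity
\[
\mathcal F_\star[u(t,\cdot)] = R(t)^{d(1-m)}\, \mathcal F_\star[v(\tau(t),\cdot)].
\]
Since $v_0 = u_0$ (so that $A[v_0] = A[u_0] < \infty$, which is implicit in the finiteness of $\zeta$), Corollary~\ref{Cor:Improvedrate2} applied to $v$ yields $\mathcal F_\star[v(\tau,\cdot)] \le \mathcal F_\star[u_0]\, e^{-(4+\zeta)\tau}$ for all $\tau \ge 0$, and substituting back gives
\[
\mathcal F_\star[u(t,\cdot)] \le \mathcal F_\star[u_0]\, R(t)^{d(1-m)}\, e^{-(4+\zeta)\tau(t)}.
\]

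Using the relations established in the first step, both $R(t)^{d(1-m)}$ and $e^{-(4+\zeta)\tau(t)}$ are powers of $1 + d(m-m_c)t$, so their product is too. The main technical obstacle is the bookkeeping: one must check that the resulting exponent is exactly $-(m+\kappa)/(m-m_c)$ with $\kappa = \zeta/d$. I would anchor this by the baseline $\zeta = 0$: the exponent must then coincide with the classical unimproved polynomial decay $-m/(m-m_c)$, which fixes the multiplicative constant between $\tau$ and $\log R$ coming from the self-similar calculation. The improvement afterwards enters linearly in $\zeta$ through the factor $e^{-\zeta\tau(t)}$, contributing the correction $-\kappa/(m-m_c)$ with $\kappa = \zeta/d$.
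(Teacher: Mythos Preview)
The paper does not give a detailed proof of this corollary; it only remarks, just below the statement, that the case $\kappa=0$ is a consequence of~\eqref{GNS} and the improvement $\kappa>0$ is a consequence of Theorem~\ref{Thm:stabilityBDNS}. Your route via Corollary~\ref{Cor:Improvedrate2} and the self-similar change is the natural one, and you correctly identify the scaling identity $\mathcal F_\star[u(t,\cdot)]=R(t)^{d(1-m)}\,\mathcal F_\star[v(\tau(t),\cdot)]$.

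There is, however, a genuine gap in your last step. The multiplicative constant between $\tau$ and $\log R$ is \emph{not} a free parameter you can ``anchor'' by matching the baseline: balancing the drift term $2\,\nabla\cdot(x\,v)$ in~\eqref{FDr} against the term produced by the rescaling forces $2\,\tau'=R'/R$, that is $\tau=\tfrac12\log R$ (exactly the choice made in~\eqref{SelfSimilarChangeOfVariables}). Substituting this into your inequality gives
\[
\mathcal F_\star[u(t,\cdot)]\le\mathcal F_\star[u_0]\,R(t)^{\,d(1-m)-\frac{4+\zeta}2}=\mathcal F_\star[u_0]\,R(t)^{-d(m-m_c)-\zeta/2}\,,
\]
and hence, with $R(t)^{d(m-m_c)}=1+d\,(m-m_c)\,t$, the exponent $-1-\zeta/\big(2\,d\,(m-m_c)\big)$ rather than $-(m+\kappa)/(m-m_c)$. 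In particular, at $\zeta=0$ you obtain the exponent $-1$, not the $-m/(m-m_c)$ you took as the ``classical unimproved'' rate; for $d\ge3$ (so that $m_c>0$) these differ, and the $\zeta$-dependent correction is also off by a factor of two. So a direct transfer of Corollary~\ref{Cor:Improvedrate2} through the self-similar change does not reproduce the exponent stated in Corollary~\ref{Cor:Improvedrate3}; the paper's own hint points instead to the R\'enyi entropy estimate~\eqref{Ch2:GrowthEntropyGNS} together with the stability inequality of Theorem~\ref{Thm:stabilityBDNS}.
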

\noindent This new result is remarkable. While the best matching function \hbox{$B(t,\cdot)\in\mathfrak B$} is such that $\ird{B^m(t,x)}\sim t^{(1-m)/(m-m_c)}\to+\infty$ as $t\to+\infty$, according to~\eqref{Ch2:GrowthEntropyGNS}, it turns out that $\mathcal F[u(t,.)]$, which involves $\ird{\(u^m(t,x)-B^m(t,x)\)}$, decays to $0$ at an algebraic rate. As a consequence, we have $\lim_{t\to+\infty}\nrm{u(t,.)-B(t,.)}1=0$, with an explicit rate, by the Csisz\'ar-Kullback-Pinsker inequality. The case $\kappa=0$ is a consequence of~\eqref{GNS} and the improvement $\kappa>0$ is a consequence of Theorem~\ref{Thm:stabilityBDNS}.

\medskip\subsection{Caffarelli-Kohn-Nirenberg inequalities and related flows}\label{Sec:introCKN}

So far the results are simple consequences of the method of~\cite{BDNS2021}. We are now going to extend them to a larger class of inequalities. On $\R^d$ with $d\ge1$, let us consider the \emph{Caffarelli-Kohn-Nirenberg interpolation inequalities}
\be{CKN}
\nrm f{2p,\gamma}\le\C_{\beta,\gamma,p}\,\nrm{\nabla f}{2,\beta}^\theta\,\nrm f{p+1,\gamma}^{1-\theta}
\ee
with optimal constant $\C_{\beta,\gamma,p}$, parameters $\beta$, $\gamma$ and $p$ such that
\be{parameters}
\gamma-2<\beta<\frac{d-2}d\,\gamma\,,\quad\gamma\in(-\infty,d)\,,\quad p\in\(1,p_\star\right]\quad\mbox{with}\quad p_\star:=\frac{d-\gamma}{d-\beta-2}\,,
\ee
and an exponent
\[\label{theta}
\theta=\frac{(d-\gamma)\,(p-1)}{p\,\big(d+\beta+2-2\,\gamma-p\,(d-\beta-2)\big)}
\]
which is determined by the scaling invariance. This formula for $\theta$ extends to~\eqref{CKN} the expression for~\eqref{GNS}, which corresponds to the special case $\beta=\gamma=0$. Here $\mathrm L^{q,\gamma}(\R^d)$ and $\mathrm L^q(\R^d)$ respectively denote the spaces of all measurable functions~$f$ such that
\[
\nrm f{q,\gamma}:=\(\ird{|f|^q\,|x|^{-\gamma}}\)^{1/q}\quad\mbox{and}\quad\nrm fq:=\nrm f{q,0}
\]
are finite. Inequality~\eqref{CKN} holds in the space $\mathrm H^p_{\beta,\gamma}(\R^d)$ of functions $f\in\mathrm L^{p+1,\gamma}(\R^d)$ such that $\nabla f\in\mathrm L^{2,\beta}(\R^d)$, defined as the completion of the space $\mathcal D(\R^d\setminus\{0\})$ of the smooth functions on $\R^d$ with compact support in~$\R^d\setminus\{0\}$, with respect to the norm given by $f\mapsto(p_\star-p)\,\nrm f{p+1,\gamma}^2+\nrm{\nabla f}{2,\beta}^2$. Since the weights are locally integrable, these spaces can also be defined as the completion of the space $\mathcal D(\R^d)$. The limitation $p\le p_\star$ in~\eqref{parameters} amounts, for a given $p>1$ to a restriction to the admissible set of parameters $(\beta,\gamma)$, namely $\beta\ge d-2+(\gamma-d)/p$. On the other hand, we notice that
\[
\beta<\frac{d-2}d\,\gamma\quad\Longleftrightarrow\quad p_\star<\frac d{d-2}\,.
\]
Inequality~\eqref{CKN} belongs to a family of inequalities introduced by L.~Caffarelli, R.~Kohn and L.~Nirenberg in~\cite{Caffarelli1984} and also earlier by V.P.~Il'in in~\cite{Ilyin}. The range of admissible parameters $(\beta,\gamma)$ is limited by~\eqref{parameters} to a cone in the quadrant $\beta<d-2$ and $\gamma<d$ (see Fig.~\ref{Fig}), but the inequality also holds in a cone in the quadrant $\beta>d-2$ and $\gamma>d$ using the property of \emph{inversion symmetry}: see~\cite[Section~2.1]{MR3579563} for details.

A central issue in Caffarelli-Kohn-Nirenberg inequalities~\eqref{CKN} is to decide whether the equality case is achieved among radial functions or not when $d\ge2$. We summarize this alternative by \emph{symmetry} versus \emph{symmetry breaking}. Symmetry in~\eqref{CKN} means that the equality case is achieved by the (generalized) \emph{Aubin-Talenti type functions}
\be{Aubin-Talenti}
\mathsf g(x)=\big(1+|x|^\sigma\big)^{-\frac1{p-1}}\quad\forall\,x\in\R^d\,,\quad\mbox{with}\quad\sigma:=2+\beta-\gamma\,.
\ee
This definition of Aubin-Talenti type functions extends the one in~\eqref{Aubin-TalentiGNS}. In the critical case $p=p_\star$, $\theta=1$, V.~Felli and M.~Schneider proved in~\cite{Felli2003} that \emph{symmetry breaking} holds~if
\[
\gamma<0\quad\mbox{and}\quad\beta_{\rm FS}(\gamma)<\beta<\frac{d-2}d\,\gamma\,,
\]
where
\[
\beta_{\rm FS}(\gamma):=d-2-\sqrt{(\gamma-d)^2-4\,(d-1)}\,.
\]
Reciprocally, if
\[
\gamma<d\,,\quad\mbox{and}\quad\gamma-2<\beta<\frac{d-2}d\,\gamma\quad\mbox{and}\quad\beta\le\beta_{\rm FS}(\gamma)\,,
\]
then \emph{symmetry} holds according to~\cite{DEL-2015}. The results are exactly the same in the subcritical case $p\in\(1,p_\star\)$ as was shown in~\cite[Theorem~2]{MR3579563} and in~\cite[Theorem~1.1]{Dolbeault2017}. See Fig.~\ref{Fig}.
\addtolength{\textwidth}{30pt}\setlength{\unitlength}{1cm}
\begin{figure}[ht]
\begin{center}\hspace*{-25pt}
\begin{picture}(13.5,6){\includegraphics[width=6.5cm]{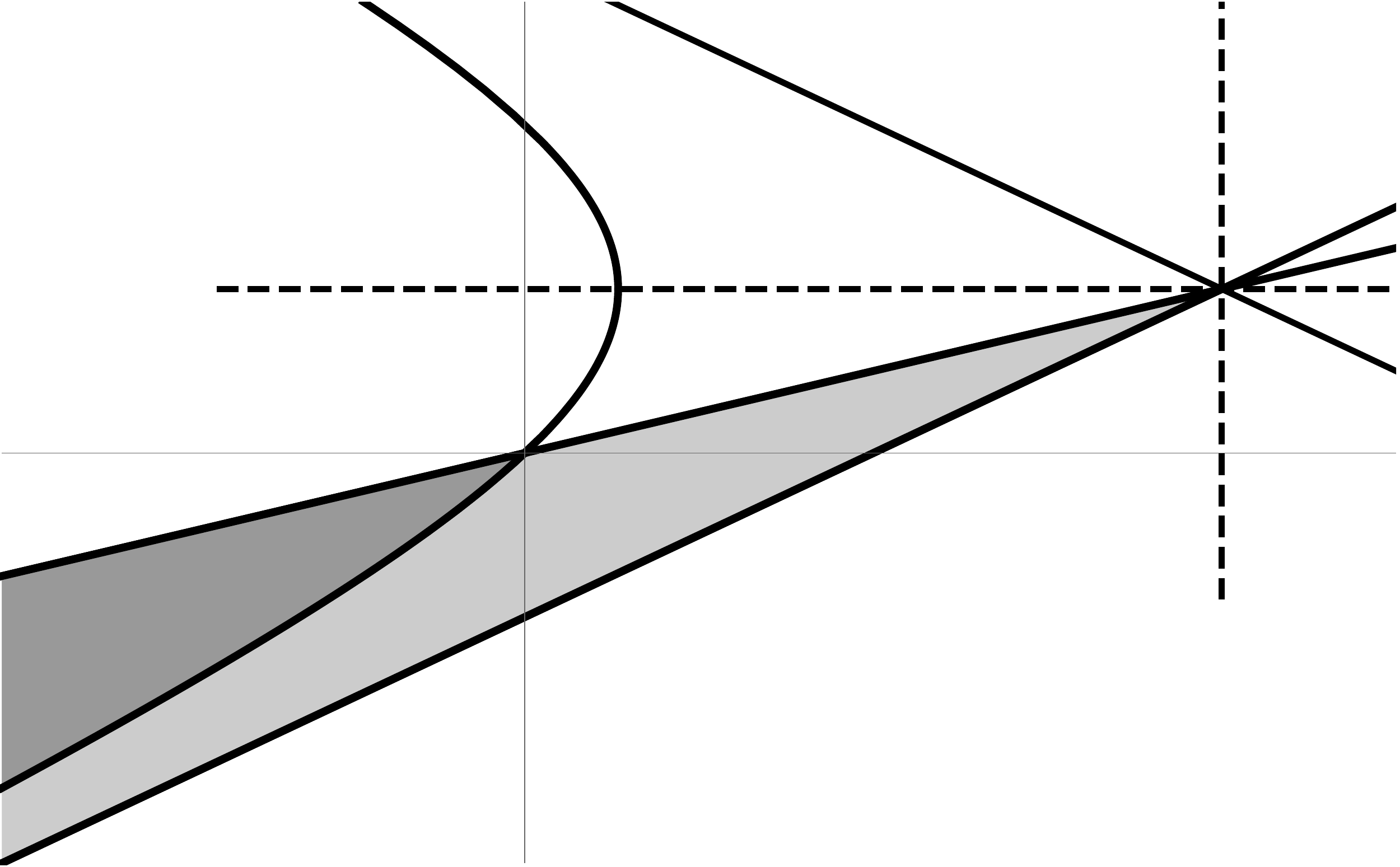}\hspace*{0.5cm}\includegraphics[width=6.5cm]{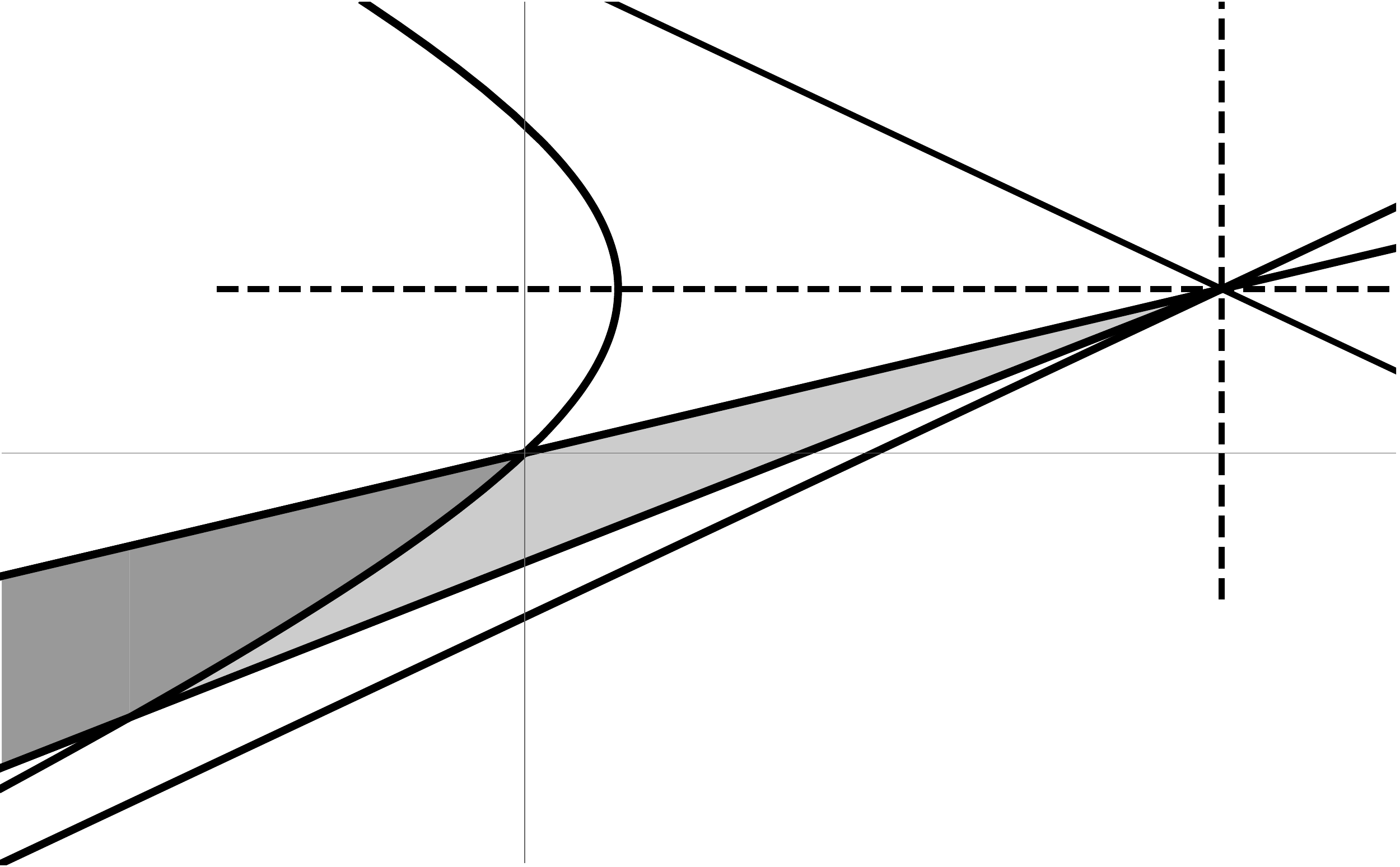}}
\put(-10,0.25){$d=4,\;p=2$}\put(-3,0.25){$d=4,\;p=6/5$}
\put(-0.25,1.5){$\gamma$}\put(-7.25,1.5){$\gamma$}
\put(-4.4,4){$\beta$}\put(-11.4,4){$\beta$}
\put(-6.5,1.92){\vector(1,0){6.5}}\put(-13.5,1.92){\vector(1,0){6.5}}
\put(-4.058,0){\vector(0,1){4.2}}\put(-11.058,0){\vector(0,1){4.2}}
\end{picture}
\caption{\label{Fig} \small\emph{In dimension $d=4$, the critical exponent $p=p_\star=d/(d-2)=2$ corresponds to the left figure, while $p=6/5$ is subcritical and corresponds to the right figure. The half cone of admissible regions of the parameters $(\beta,\gamma)$ appear in grey, with symmetry breaking in dark grey and symmetry in light grey (the symmetry region is bounded if and only if $p<p_\star$).}}
\end{center}
\end{figure}
\addtolength{\textwidth}{-30pt}

More is known. In the limit case $\beta=(d-2)\,\gamma/d$ corresponding to $p=p_\star=d/(d-2)$, we learn from~\cite{Catrina2001} that the optimal constant is achieved among radial functions, but that optimal functions exist only for $\beta\ge0$. The limit case as $\beta\to(\gamma-2)_+$, that is, $p\to1_+$, gives rise to a family of logarithmic Hardy inequalities which has been studied in~\cite{DDFT,DE2010}. We refer to~\cite{DEL-2015,Dolbeault2017} for more details on earlier contributions and to~\cite{1703} for a general overview. Stability results \emph{\`a la} Bianchi-Egnell, with no constructive estimates, appeared in~\cite{wei2021stability} in the critical case of~\eqref{CKN}, as an extension of the results of~\cite{MR1124290,figalli2020sharp}. So far there are no constructive stability results for~\eqref{CKN}.

\medskip Exactly as in the case of Inequalities~\eqref{GNS}, it is interesting to consider on $\R^d$ the nonlinear flow defined by
\be{FDE1}
\frac{\partial u}{\partial t}+|x|^\gamma\,\nabla\(|x|^{-\beta}\,u\,\nabla u^{m-1}\)=0\,,\quad u(t=0,\cdot)=u_0\,,
\ee
which generalizes~\eqref{FD} to $(\beta,\gamma)\neq(0,0)$ for $m\in[m_1,1)$ with a generalized critical exponent defined by $m_1:=1-\frac{2+\beta-\gamma}{2\,(d-\gamma)}$. The computation
\be{DE}
\frac \rd{\dt}\ird{u\,|x|^{-\gamma}}=0\,,\,\,\frac \rd{\dt}\ird{u^m\,|x|^{-\gamma}}=\frac{m^2}{1-m}\ird{u\,\left|\nabla u^{m-1}\right|^2\,|x|^{-\beta}}
\ee
enters in the formalism of the \emph{generalized R\'enyi entropy powers}. With $f=u^{m-1/2}$ and $p=1/(2\,m-1)$ so that $u=f^{2p}$, we notice that
\begin{multline*}
\textstyle\ird{u\,|x|^{-\gamma}}=\nrm f{2p,\gamma}^{2p}\,,\quad\ird{u^m\,|x|^{-\gamma}}=\nrm f{p+1,\gamma}^{p+1}\\
\textstyle\mbox{and}\quad\ird{u\,|\nabla u^{m-1/2}|^2\,|x|^{-\beta}}=\frac{4\,(1-m)^2}{(2\,m-1)^2}\,\nrm{\nabla f}{2,\beta}^2\,.
\end{multline*}
As in the non-weighted case, $m=m_1$ corresponds to $p=p_\star$. Taking into account~\eqref{CKN} and~\eqref{DE}, any solution of~\eqref{FDE1} satisfies
\[
\frac \rd{\dt}\ird{u^m\,|x|^{-\gamma}}\le C\(\ird{u^m\,|x|^{-\gamma}}\)^{-\frac{1-\theta}{\theta(p+1)}}
\]
for some numerical constant $C$ involving~$\C_{\beta,\gamma,p}$. Altogether, this proves a growth estimate similar to~\eqref{Ch2:GrowthEntropyGNS}. As for~\eqref{GNS}, this estimate is in fact equivalent to~\eqref{CKN}.

Again, more readable estimates are achieved using self-similar variables and relative entropies. Equation~\eqref{FDE1} can be rewritten in these variables~as
\be{FDE-CKN}
|x|^{-\gamma}\,\frac{\partial v}{\partial t}+\nabla\cdot\(|x|^{-\beta}\,v\,\nabla v^{m-1}\)=\sigma\,\nabla\cdot\(x\,|x|^{-\gamma}\,v\)\,.
\ee
The counterpart of Corollary~\ref{Cor:Improvedrate} is an improved decay rate of the free energy $\mathcal F$ now defined as
\begin{equation}\label{entropy.CKN}
\mathcal F[v]=\frac{2\,p}{1-p}\ird{\(v^\frac{p+1}{2\,p}-\mathsf g^{p+1}-\frac{p+1}{2\,p}\mathsf g^{1-p}\left(v-\mathsf g^{2p}\right)\)|x|^{-\gamma}}\,. 
\end{equation}
where $\mathsf g^{2p}$ is now a stationary solution to~\eqref{FDE-CKN} and $m=(p+1)/(2\,p)$. 
\begin{theorem}\label{Thm:Improvedrate} Let $d\ge1$. Assume that $(\beta,\gamma)\neq(0,0)$ satisfies
\be{condition.eta}
\gamma<d\,,\quad\gamma-2<\beta<\frac{d-2}d\,\gamma\quad\mbox{and}\quad\beta < \beta_{\rm FS}(\gamma)\,,
\ee
let $\alpha=1+(\beta-\gamma)/2$ and assume that $m\in[m_1,1)$. If $v$ solves~\eqref{FDE-CKN} with a nonnegative initial datum $v_0\in\mathrm L^{1,\gamma}(\R^d)$ such that $\ird{v_0(x)\,|x|^{-\gamma}}=\ird{\mathsf g(x)^{2p}\,|x|^{-\gamma}}$ and
\[
A[v_0]:=\sup_{R>0} R^{\frac{2+\beta-\gamma}{1-m}-(d-\gamma)}\,\int_{|x|>R}v_0(x)\,|x|^{-\gamma}\,\dx\,<\infty\,,
\]
then there are some $\zeta>0$ and some $T>0$ which depend explicitly only on $m$, $d$, $\nrm{v_0}{1,\gamma}$ and~$A[v_0]$ such that
\be{improved.rate.CKN}
\mathcal F[v(t,.)]\le\mathcal F[v_0]\,e^{-\,(4\,\alpha^2+\zeta)\,t}\quad\forall\,t\ge2\,T\,.
\ee
\end{theorem}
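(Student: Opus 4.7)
The plan follows the entropy methods strategy used for~\eqref{GNS} in Corollary~\ref{Cor:Improvedrate}, carefully adapted to the weighted dynamics~\eqref{FDE-CKN}. Three ingredients are needed: (a) the baseline entropy--entropy production inequality, valid under~\eqref{condition.eta}; (b) quantitative uniform convergence of $v(t,\cdot)$ to the Aubin--Talenti profile $\mathsf g^{2p}$ after a waiting time $T$; and (c) a weighted Hardy--Poincar\'e inequality producing a strictly larger spectral gap in the linearized regime.

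\textbf{Step 1 (baseline rate).} Under~\eqref{condition.eta}, symmetry holds in~\eqref{CKN}, and the resulting inequality is equivalent, via the substitution $v = f^{2p}$ with $m = (p+1)/(2\,p)$, to the entropy--entropy production estimate
\[
\mathcal I[v] \geq 4\,\alpha^2\,\mathcal F[v]\,,
\]
where $\mathcal I[v]$ denotes the weighted Fisher information driving the dissipation of the free energy~\eqref{entropy.CKN} along~\eqref{FDE-CKN}. Combined with $\tfrac{\rd}{\dt}\mathcal F[v(t,\cdot)] = -\mathcal I[v(t,\cdot)]$, this yields $\mathcal F[v(t,\cdot)] \leq \mathcal F[v_0]\,e^{-4\alpha^2 t}$ for all $t \geq 0$.

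\textbf{Step 2 (quantitative convergence to $\mathsf g$).} Using the mass constraint and $A[v_0] < \infty$, I would establish uniform two-sided relative bounds of Aubin--Talenti type: there exist an explicit $T = T(m,d,\nrm{v_0}{1,\gamma},A[v_0])$ and $0 < c_-(t) \leq c_+(t)$ with $c_\pm(t)\to 1$ as $t \to \infty$, such that
\[
c_-(t)\,\mathsf g(x)^{2p} \leq v(t,x) \leq c_+(t)\,\mathsf g(x)^{2p}\quad\forall\,t\geq T\,,\;x\in\R^d\,.
\]
This is the main obstacle: the degeneracy of the weights $|x|^{-\gamma}$ and $|x|^{-\beta}$ at the origin requires a careful use of weighted parabolic regularity, positivity, and Harnack-type estimates. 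I expect to adapt the barrier and moment arguments of~\cite{BDNS2021} to the weighted setting, exploiting the radial structure of $\mathsf g$ and the tail control encoded by $A[v_0]$. The absence of a well-preparedness assumption is precisely what is compensated by the waiting time $T$.

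\textbf{Step 3 (spectral improvement).} In the regime of uniform relative closeness to $\mathsf g^{2p}$, the nonlinear inequality of Step~1 can be linearized around $\mathsf g^{2p}$. The linearized operator is controlled by a Hardy--Poincar\'e inequality with weights built from $\mathsf g$ and the measures $|x|^{-\gamma}\,\dx$, $|x|^{-\beta}\,\dx$; the strict condition $\beta < \beta_{\rm FS}(\gamma)$ forces its first non-trivial eigenvalue to exceed $4\alpha^2$ by an explicit amount. A standard perturbation argument then upgrades Step~1, on the time interval $[T,\infty)$, to
\[
\mathcal I[v(t,\cdot)] \geq (4\alpha^2 + \zeta')\,\mathcal F[v(t,\cdot)]\quad\forall\,t\geq T\,,
\]
for some explicit $\zeta' > 0$ depending on the same quantities as $T$. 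Integration along~\eqref{FDE-CKN} yields $\mathcal F[v(t,\cdot)] \leq \mathcal F[v(T,\cdot)]\,e^{-(4\alpha^2 + \zeta')(t-T)}$ for $t \geq T$. Combining with $\mathcal F[v(T,\cdot)] \leq \mathcal F[v_0]\,e^{-4\alpha^2 T}$ from Step~1, one obtains, for any $t \geq 2T$,
\[
\mathcal F[v(t,\cdot)] \leq \mathcal F[v_0]\,e^{-4\alpha^2 t - \zeta'(t-T)} \leq \mathcal F[v_0]\,e^{-(4\alpha^2 + \zeta)\,t}\,,
\]
with $\zeta := \zeta'/2$, since $\zeta'(t-T) \geq \zeta' \cdot t/2 = \zeta\,t$ for $t \geq 2T$. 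This proves~\eqref{improved.rate.CKN} with constants that remain explicit at every step.
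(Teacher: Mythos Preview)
Your proposal is correct and follows essentially the same route as the paper: the baseline entropy--entropy production inequality~\eqref{entropy.eep}, the quantitative relative-error convergence after an explicit waiting time (this is Theorem~\ref{uniform.convergence}, proved via the global Harnack principle and weighted parabolic regularity of~\cite{Bonforte2019a,bonforte2020fine}), and the linearization combined with the Hardy--Poincar\'e spectral gap of Proposition~\ref{Prop:SpectralGap}, where the strict inequality $\beta<\beta_{\rm FS}(\gamma)$ (equivalently $\eta>1$) yields $2(1-m)\Lambda>4\alpha^2$. Your final Gr\"onwall-and-halving argument to pass from the rate $4\alpha^2+\zeta'$ on $[T,\infty)$ to $4\alpha^2+\zeta$ on $[2T,\infty)$ with $\zeta=\zeta'/2$ matches the paper's derivation of~\eqref{improved.rate.CKN} from~\eqref{inq.to.prove}.
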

\noindent Under the restriction that $A[v_0]$ is finite (with a definition for $A$ which generalizes the one of Corollary~\ref{Cor:Improvedrate}), Inequality~\eqref{improved.rate.CKN} provides us with an improved rate of convergence since the optimal rate of convergence (without the restriction $A[v_0]<\infty$) for functions in $\mathrm{L}^{1,\gamma}(\R^d)$ is $4\,\alpha^2$, see Section~\ref{ssec:self.similar}. It is remarkable that no other condition is needed in the case $(\beta,\gamma)\neq(0,0)$, which is a major difference with Corollary~\ref{Cor:Improvedrate} where the conditions $\ird{v_0}=\ird{\mathcal B}$ and $\ird{x\,v_0}=0$ have to be assumed. Although somewhat hidden, these conditions are also present in Corollaries~\ref{Cor:Improvedrate2} and~\ref{Cor:Improvedrate3} as we use the \emph{optimized free energy functional} $\mathcal F_\star$ and the optimization with respect to $B\in\mathfrak B$ induces a similar normalization condition.

In Theorem~\ref{Thm:Improvedrate}, $T$ is a \emph{threshold time} which is similar to the threshold time in the non-weighted case and determines an \emph{asymptotic time layer} $[T,+\infty)$. We actually prove that
\begin{equation}\label{inq.to.prove}
\mathcal F[v(t,.)]\le\mathcal F[v(T,.)]\,e^{-\,(4\,\alpha^2+2\,\zeta)\,(t-T)}\quad\forall\,t\ge T\,.
\end{equation}
Compared with Corollary~\ref{Cor:Improvedrate}, we have no improved decay estimate on the \emph{initial time layer} $[0,T]$. The asymptotic decay rate of $\mathcal F[v(t,.)]$ as $t\to+\infty$ is known from~\cite{Bonforte201761,MR3579563}. Based on a fully quantitative regularity theory, the main progress here is that we give constructive estimates of~$T$ and an improved decay rate on $[T,+\infty)$.

\medskip\subsection{Simplifying assumptions and outline of the paper}\label{Sec:introSimplifications}

In this paper, we present some results in the spirit of~\cite{BDNS2021} and the key ideas of the proofs, with several simplifications:
\begin{itemize}
\item[--] we do not track the exact dependence of the constants on the parameters,
\item[--] we do not distinguish in the estimates quantities which depend on the relative entropy and those coming from $A[v]$ defined as in Theorem~\ref{Thm:Improvedrate}, on the basis of the following result:
\end{itemize}
\begin{lemma}\label{Lem:Entropy-MA}
Let $d\ge1$. Assume that the parameters $\beta$, $\gamma$ and $m$ are as in~Theorem~\ref{Thm:Improvedrate} and take $\sigma=2+\beta-\gamma$. There are two explicit positive numerical constants $c_1$ and~$c_2$ such that, for any nonnegative function $v\in\mathrm L^{1,\gamma}\big(\R^d,(1+|x|^\sigma)\,dx\big)$ such that $A[v]$ is finite, we have
\begin{align*}
&\ird{|x|^{\sigma-\gamma}\,v}\le\ird{v\,|x|^{-\gamma}}+c_1\,A[v]\,,\\
&\(\ird{v^m\,|x|^{-\gamma}}\)^{\kern-3pt\frac1m}\le c_2\(\ird {v\,|x|^{-\gamma}}\)^{\kern-3pt 1-(d-\gamma)\,\frac{1-m}{\sigma\,m}}\(\ird{|x|^{\sigma-\gamma}\,v}\)^{\kern-3pt(d-\gamma)\,\frac{1-m}{\sigma\,m}}.
\end{align*}
\end{lemma}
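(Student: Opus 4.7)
\noindent\emph{Plan of proof.} Both inequalities follow from elementary splitting arguments combined with H\"older's inequality; the assumptions $m\in[m_1,1)$ and $\sigma:=2+\beta-\gamma<d-\gamma$ (i.e.\ $\beta<d-2$, which is part of~\eqref{parameters} and~\eqref{condition.eta}) serve only to ensure convergence of certain integrals and positivity of certain exponents.

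\medskip\noindent\emph{First estimate.} I would split the left-hand side at $|x|=1$. On $\{|x|\le 1\}$, since $\sigma>0$ we have $|x|^{\sigma-\gamma}\le|x|^{-\gamma}$, so that piece is dominated by $\ird{v\,|x|^{-\gamma}}$. On $\{|x|>1\}$, write $|x|^\sigma=1+\sigma\int_1^{|x|}s^{\sigma-1}\,\mathrm ds$ and apply Fubini to obtain
\[
\int_{|x|>1}|x|^{\sigma-\gamma}v\,\dx=\int_{|x|>1}v\,|x|^{-\gamma}\,\dx+\sigma\int_1^\infty s^{\sigma-1}g(s)\,\mathrm ds,\quad g(s):=\int_{|x|>s}v\,|x|^{-\gamma}\,\dx.
\]
By definition of $A[v]$ one has $g(s)\le A[v]\,s^{(d-\gamma)-\sigma/(1-m)}$ for every $s>0$, which reduces the remainder to an elementary integral, convergent exactly when $(d-\gamma)(1-m)<\sigma m$. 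This latter inequality follows from $\sigma<d-\gamma$ at the endpoint $m=m_1$ and \emph{a fortiori} for $m>m_1$, and yields the explicit constant $c_1=\sigma(1-m)/\big(\sigma m-(d-\gamma)(1-m)\big)$.

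\medskip\noindent\emph{Second estimate.} Setting $M:=\ird{v\,|x|^{-\gamma}}$ and $N:=\ird{|x|^{\sigma-\gamma}\,v}$, I would prove this as a weighted Lebesgue interpolation by a split-and-optimize argument. For $R>0$, decompose $\ird{v^m\,|x|^{-\gamma}}=I_{<R}+I_{>R}$. For $I_{<R}$, factorize $v^m|x|^{-\gamma}=(v|x|^{-\gamma})^m\cdot|x|^{-\gamma(1-m)}$ and apply H\"older with exponents $(1/m,1/(1-m))$, using $\gamma<d$ to integrate the weight over the ball; this gives $I_{<R}\le c_{<}\,M^m\,R^{(d-\gamma)(1-m)}$. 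For $I_{>R}$, factorize $v^m|x|^{-\gamma}=(v|x|^{\sigma-\gamma})^m\cdot|x|^{-\gamma-(\sigma-\gamma)m}$ and apply H\"older similarly; the dual factor is integrable at infinity precisely under the same condition $(d-\gamma)(1-m)<\sigma m$, and one gets $I_{>R}\le c_{>}\,N^m\,R^{(d-\gamma)(1-m)-\sigma m}$. Balancing the two contributions---i.e.\ choosing $R^\sigma$ proportional to $N/M$---the sum becomes of order $M^{m-(d-\gamma)(1-m)/\sigma}\,N^{(d-\gamma)(1-m)/\sigma}$; raising to the power $1/m$ produces the claimed inequality with an explicit constant~$c_2$.

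\medskip\noindent\emph{Main difficulty.} The calculations themselves are routine; the only subtle point is the strict positivity of the quantity $\sigma m-(d-\gamma)(1-m)$ throughout the admissible range, which simultaneously governs (i)~the convergence of the layer-cake remainder in the first estimate, (ii)~the convergence at infinity of the H\"older dual factor in the second, (iii)~the positivity of the two balancing exponents, and (iv)~the denominators of the explicit constants $c_1$ and $c_2$. At the endpoint $m=m_1=1-\sigma/\bigl(2(d-\gamma)\bigr)$ this positivity reduces to $\sigma<d-\gamma$, i.e.\ to $\beta<d-2$, and therefore holds throughout the range considered in Theorem~\ref{Thm:Improvedrate}.
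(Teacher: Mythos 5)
Your proof is correct, and it is essentially the argument that the paper delegates to its references: the paper proves nothing here directly, pointing instead to~\cite[Proposition~7.3]{BDNS2021} for the first bound and to the Carlson--Levin inequality~\cite{Carlson1935,Levin48,Carrillo2019} for the second, whereas you reconstruct both. Your layer-cake/Fubini treatment of the first estimate and your split-at-$R$-then-H\"older-then-optimize treatment of the second are precisely the standard proofs underlying those citations, and your verification that $\sigma m-(d-\gamma)(1-m)>0$ (via $\beta<d-2$, hence $\sigma<d-\gamma$, hence $m_1>1/2$) correctly identifies the one structural condition on which all the convergences, exponent signs, and constants $c_1,c_2$ hinge.
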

\noindent As a consequence, the relative entropy as defined in~\eqref{entropy.CKN} is controlled as soon as~$v$ has finite mass and $A[v]$ is finite. For a proof of Lemma~\ref{Lem:Entropy-MA}, it is easy to adapt the result of~\cite[Proposition~7.3]{BDNS2021} for the first inequality and use the Carlson-Levin estimate for the second inequality: see~\cite{Carlson1935,Levin48} and~\cite[Lemma~5]{Carrillo2019} for the proof of a similar result.

\medskip This paper is organized as follows. Section~\ref{Sec:RelativeError} is devoted to the regularization properties of the evolution equations~\eqref{FD} and~\eqref{FDE1}. On the basis of~\cite{Bonforte2019a}, the results on the \emph{relative uniform convergence} and on \emph{threshold time} $t_\star$ of~\cite{BDNS2021} for~\eqref{FD} are extended to~\eqref{FDE1}. With less details than in~\cite{BDNS2021}, all intermediate estimates are stated but only the differences with~\cite{BDNS2021} are emphasized: see Theorem~\ref{uniform.convergence} for the main result of the section. Entropy methods and \emph{improved entropy -- entropy production estimates} are applied in Section~\ref{Sec:Entropy-CKN} to prove Theorem~\ref{Thm:Improvedrate}. The fact that no additional constraint has to be imposed to get the improved decay rates for the solutions to~\eqref{FDE1}, a major difference with the standard fast diffusion equation~\eqref{FD}, is commented there. Section~\ref{Sec:StabGNS} is devoted to a summary of the strategy for proving the stability results for Gagliardo-Nirenberg-Sobolev inequalities, see Theorem~\ref{Thm:BDNS}, whose detailed proof can be found in~\cite{BDNS2021}. Our goal here is to explain that the improved decay rates can be extended to the initial time layer using a differential inequality based on the \emph{carr\'e du champ} method. Such an estimate is missing in the case of the evolution equation~\eqref{FDE1} associated with the Caffarelli-Kohn-Nirenberg inequalities, but a similar property is expected: this motivates the conjecture of Section~\ref{Sec:Conjecture-CKN}.

\section{A threshold time for the convergence in relative error}\label{Sec:RelativeError}

Equation~\eqref{FDE1} admits a family of self-similar solutions, that we call \emph{Barenblatt solutions} as a straightforward generalization of the non-weighted case, as in~\cite{Bonforte201761}. These solutions can be written as
\begin{equation}\label{Barenblatts}
B(t,x):=R(t)^{-d+\gamma}\,\mathsf g^{2p}\left(x/R(t)\right)
\end{equation}
where $\mathsf g$ is defined by~\eqref{Aubin-Talenti}, $p=1/(2\,m-1)$, $R(t)=c\,t^{1/\xi}$ for some constant $c>0$ that depends on $m$, $d$, $\beta$ and $\gamma$, and
\begin{equation}\label{xi}
\xi=2+\beta-\gamma -(d-\gamma)\,(1-m)\,.
\end{equation}
In order to fix notations, let us define
\be{Mstar}
\Mstar:=\ird{\mathsf g^{2p}}\,.
\ee
The purpose of this section is to prove that Barenblatt solutions attract all solutions of~\eqref{FDE1}. Notice that the case $(\beta,\gamma)=(0,0)$ is covered.

\medskip\subsection{Convergence in relative error}

The basin of attraction of the set of Barenblatt solutions in the strong topology of the \emph{uniform convergence in relative error} has been defined in~\cite{Vazquez2003}, and characterized in~\cite{Bonforte2019a}. It is a key point to estimate the stabilization rates, see~\cite{Blanchet2009,Bonforte201761}. The result goes as follows.
\begin{theorem}\label{uniform.convergence}
Let $d\ge2$, $m\in[m_1,1)$ and assume that $\beta$ and $\gamma$ satisfy~\eqref{parameters}. Let~$u$ be a solution to~\eqref{FDE1} corresponding to a nonnegative initial datum $u_0 \in \LL^{1,\gamma}(\R^d)$ such that $\ird{u_0\,|x|^{-\gamma}}=\Mstar$ and
\[
A[u_0]=\sup_{R>0} R^{\frac{2+\beta-\gamma}{1-m}-(d-\gamma)}\,\int_{|x|>R}u_0(x)\,|x|^{-\gamma}\,\dx<\infty\,.
\]
Then there exists an explicit $\varepsilon_\star$ such that for any $\varepsilon\in(0, \varepsilon_\star)$
\begin{equation}\label{relative.error}
\sup_{x\in\R^d}\Big|\frac{u(t,x)}{B(t,x)}-1\Big|\le \varepsilon\,\quad \forall\,t\ge t_\star:=\c\,\varepsilon^{-\a}\,.
\end{equation}
Here $\varepsilon_\star$ and $\a>0$ are numerical constants which depend only on $d$, $m$, $\beta$ and $\gamma$ while $\c$ depends also on $A[u_0]$.
\end{theorem}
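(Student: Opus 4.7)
The plan is to pass to self-similar variables, where the convergence $u/B\to1$ becomes convergence of $v(\tau,\cdot)/\mathsf g^{2p}$ to $1$, and then to follow the three-step programme of~\cite{BDNS2021} adapted to the weighted equation~\eqref{FDE1}. First I would set $u(t,x)=R(t)^{-d+\gamma}v\bigl(\tau(t),x/R(t)\bigr)$ with $R$ as in~\eqref{Barenblatts} and a logarithmic rescaling of time, so that $v$ solves~\eqref{FDE-CKN} and $\mathsf g^{2p}$ is the stationary profile. The assumption $\ird{u_0\,|x|^{-\gamma}}=\Mstar$ is preserved by the flow and the tail control $A[u_0]<\infty$ transfers to $v(0,\cdot)$. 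Then Lemma~\ref{Lem:Entropy-MA} bounds $\mathcal F[v_0]$ in terms of $\Mstar$ and $A[u_0]$, and the entropy--entropy production inequality associated with~\eqref{CKN} in the symmetry range~\eqref{condition.eta} produces an explicit exponential decay of $\mathcal F[v(\tau,\cdot)]$; a weighted Csisz\'ar--Kullback--Pinsker inequality then upgrades this to quantitative $\LL^{1,\gamma}$ convergence $\nrm{v(\tau,\cdot)-\mathsf g^{2p}}{1,\gamma}\to0$ at an explicit rate.

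Next I would establish the propagation of weighted tails: the quantity $\sup_{R>0} R^{\mu}\int_{|x|>R}u(t,x)|x|^{-\gamma}\,\dx$, with $\mu=\tfrac{2+\beta-\gamma}{1-m}-(d-\gamma)$, remains finite along~\eqref{FDE1} with an explicit bound depending on $A[u_0]$, $\Mstar$ and~$t$. Combined with the local boundedness, weighted Harnack inequality and H\"older regularity for~\eqref{FDE1} proved in~\cite{Bonforte2019a}, this yields two-sided Barenblatt-type bounds $c_-\,B\le u\le c_+\,B$ from some explicit time $\bar t=\bar t(A[u_0],\Mstar)$ on, together with uniform H\"older estimates on $u/B$ on annular regions $\{R_0\le|x|/R(t)\le R_1\}$ in self-similar variables.

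Finally, interpolating the $\LL^{1,\gamma}$ decay of the first step against the uniform H\"older and Harnack bounds of the second, exactly as in~\cite[Theorem~4.2]{BDNS2021}, would upgrade $\LL^{1,\gamma}$ convergence to uniform convergence in relative error, yielding $\sup_{x\in\R^d}|u(t,x)/B(t,x)-1|\le K\,t^{-\eta}$ for all $t\ge\bar t$, with $K$ and $\eta>0$ explicit in $d$, $m$, $\beta$, $\gamma$ and $A[u_0]$. Inverting this bound and choosing $\varepsilon_\star$ so small that $\bar t\le\c\,\varepsilon_\star^{-\a}$ yields~\eqref{relative.error} with $\a=1/\eta$ and an explicit $\c$. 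The main obstacle is the second step: the singular weights $|x|^{-\beta},|x|^{-\gamma}$ prevent a direct use of the classical De Giorgi--Nash--Moser/Aronson--B\'enilan toolkit, so the whole argument hinges on the weighted regularity theory of~\cite{Bonforte2019a}; tracking constants there to obtain the explicit dependence of $t_\star$ on $\varepsilon$ and $A[u_0]$ is where most of the technical effort has to be invested.
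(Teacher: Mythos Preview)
Your proposal is correct and follows essentially the same route as the paper: global Harnack principle (taken from~\cite{bonforte2020fine}) for two-sided Barenblatt bounds, entropy decay plus a weighted Csisz\'ar--Kullback--Pinsker inequality for $\mathrm L^{1,\gamma}$ control, weighted H\"older regularity from~\cite{Bonforte2019a}, and an interpolation between $\mathrm L^{1,\gamma}$ and $C^\mu$ norms to reach uniform convergence in relative error. The paper organizes the endgame by an explicit inner/outer split---the outer region $|x|>\rho(\varepsilon)\,R(t)$ is handled directly from the global Harnack principle by comparing Barenblatt profiles of nearby masses and time shifts, while the interpolation argument is carried out only on the inner region after a rescaling that freezes the domain---but the ingredients and the logic are the same as yours.
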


\medskip\subsection{Global Harnack Principle}

\begin{proposition}\label{control.above}\cite[Theorem~2.1]{bonforte2020fine}
Under the assumptions of Theorem~\ref{uniform.convergence}, there exist positive constants $\overline{t}$ and $\overline{M}$ such that any solution $u$ to~\eqref{FDE1} satisfies
\begin{equation}\label{control.above.inq}
u(t,x) \le B_{\overline{M}}\big(t+\overline{t}, x\big)\,\quad \forall (t,x)\in\left[2\,\overline{t}\right.,\left. + \infty\right]\times\R^d\,. 
\end{equation}
where $B_{M}(t,x):= (M/\Mstar)^{\sigma/\xi}\,B\big(t, (M/ \Mstar)^{(1-m)/\xi}\,x\big)$.
\end{proposition}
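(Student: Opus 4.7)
The plan is to obtain the pointwise upper bound in three stages: first a crude uniform $\mathrm{L}^\infty$ smoothing estimate at a quantitative time $\overline t$, then a matching tail decay at spatial infinity inherited from the assumption $A[u_0]<\infty$, and finally a parabolic comparison argument that propagates the resulting one-sided bound $u(\overline t,\cdot)\le B_{\overline M}(2\,\overline t,\cdot)$ to all later times $t\ge 2\,\overline t$.

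First I would establish a smoothing effect of the form $\|u(\overline t,\cdot)\|_{\mathrm L^\infty}\le K_1\,\overline t^{-a}$ for some explicit exponent $a=(d-\gamma)/\xi$ and constant $K_1$ depending only on $\nrm{u_0}{1,\gamma}$, $m$, $d$, $\beta$, $\gamma$. This is proved by Moser (or De Giorgi) iteration adapted to equation~\eqref{FDE1}: the natural measures $|x|^{-\gamma}\dx$ and $|x|^{-\beta}\dx$ are locally doubling for $\gamma,\beta<d$, and the weighted Caffarelli-Kohn-Nirenberg inequalities~\eqref{CKN} provide the Sobolev-type embedding needed to run the iteration. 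The resulting scaling exponent matches the one of the self-similar profile $B$ in~\eqref{Barenblatts}, so that the bound is compatible with the expected large-time behavior.

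Next I would use the hypothesis $A[u_0]<\infty$ to produce the correct tail decay. Exploiting the equation, one propagates the tail control by a barrier argument: a truncated Barenblatt profile of suitable mass serves as a supersolution outside a ball of radius comparable to $R(\overline t)$, yielding for $t\in[\overline t,2\,\overline t]$ and $|x|$ large the pointwise estimate
\[
u(t,x)\le C\big(A[u_0],\nrm{u_0}{1,\gamma}\big)\,|x|^{-\sigma/(1-m)}\,,
\]
which is precisely the algebraic decay of $B_{\overline M}(t+\overline t,x)$ as $|x|\to+\infty$. Combining this with the $\mathrm L^\infty$-smoothing at $t=2\,\overline t$, one chooses $\overline M$ large enough, depending only on $\nrm{u_0}{1,\gamma}$ and~$A[u_0]$, so that $u(2\,\overline t,x)\le B_{\overline M}(3\,\overline t,x)$ for every $x\in\R^d$.

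The last step is to propagate this one-sided ordering forward in time via a parabolic comparison principle for~\eqref{FDE1}. This is the delicate point and I expect it to be the main obstacle: the weights $|x|^{-\gamma}$ and $|x|^{-\beta}$ are singular at the origin, so the classical comparison principle for degenerate parabolic equations does not apply verbatim. The natural remedy is to regularize by replacing $|x|^{-\gamma}$, $|x|^{-\beta}$ with $(|x|^2+\varepsilon^2)^{-\gamma/2}$, $(|x|^2+\varepsilon^2)^{-\beta/2}$, apply the standard comparison to the uniformly parabolic regularized problem, and pass to the limit $\varepsilon\to 0^+$ using the stability of weak solutions. A second difficulty, more bookkeeping than conceptual, is that both the smoothing constant $K_1$ and the tail constant must be explicit in $A[u_0]$ and~$\nrm{u_0}{1,\gamma}$, since the statement demands quantitative values for $\overline M$ and $\overline t$; this forces one to keep track of constants throughout the Moser iteration and the barrier construction.
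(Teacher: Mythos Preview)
The paper does not give its own proof of this proposition: it simply cites \cite[Theorem~2.1]{bonforte2020fine} and mentions an alternative proof in \cite[Proposition~4.6]{BDNS2021}. Your three-stage outline (smoothing estimate, tail control from $A[u_0]<\infty$, then parabolic comparison) is precisely the standard scheme used in those references, and the difficulties you flag --- the singular weights in the comparison principle and the need to track constants explicitly --- are the right ones.

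One small discrepancy worth noting: the paper remarks, just after stating Propositions~\ref{control.above} and~\ref{control.below}, that $\overline M$ can be taken as $\kappa_1\,\Mstar$ with $\kappa_1$ depending only on $d$, $m$, $\gamma$, $\beta$, so that the dependence on $A[u_0]$ is absorbed entirely into $\overline t$. In your sketch you let $\overline M$ depend on $A[u_0]$ as well. This does not affect the validity of the bound~\eqref{control.above.inq}, but it matters for the later quantitative analysis (Corollary~\ref{control.tails} and the definition of $\varepsilon_m$), where the authors use that $\overline M/\Mstar$ is a universal constant. To match the paper's statement you would need to organize the argument so that the tail barrier is a fixed multiple of the Barenblatt profile while the waiting time $\overline t$ carries the $A[u_0]$-dependence.
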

\begin{proposition}\label{control.below}\cite[Theorem~3.1]{bonforte2020fine}
Under the assumptions of Theorem~\ref{uniform.convergence}, there exist positive constants $\underline{t}$ and $\underline{M}$ such that any solution $u$ to~\eqref{FDE1} satisfies
\begin{equation}\label{control.below.inq}
u(t,x) \ge B_{\underline{M}}\big(t-\underline{t}, x\big)\,\quad \forall (t,x)\in\left[2\,\underline{t}\right.,\left. + \infty\right]\times\R^d\,. 
\end{equation}
where $B_{M}(t,x)$ is as in Proposition~\ref{control.above}.
\end{proposition}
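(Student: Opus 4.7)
\medskip\noindent\emph{Proof proposal.} The plan is to derive the pointwise lower bound by combining three ingredients: (i) the conservation of the weighted mass $\ird{u\,|x|^{-\gamma}}=\Mstar$, (ii) the upper bound from Proposition~\ref{control.above}, which forces most of the mass to remain inside a ball of radius comparable to $R(t)$, and (iii) a local Aronson--Caffarelli type estimate for the weighted equation~\eqref{FDE1} producing a pointwise lower bound from a local $\mathrm L^{1,\gamma}$ control. The conclusion then follows from a comparison argument with a small-mass Barenblatt profile, exactly as in the unweighted theory but with every step adapted to the presence of the weights $|x|^{-\gamma}$ and $|x|^{-\beta}$.

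First I would use mass conservation together with~\eqref{control.above.inq}: for $t\ge 2\,\overline t$, the upper bound $u(t,\cdot)\le B_{\overline M}(t+\overline t,\cdot)$ has tails of order $|x|^{-\sigma/(1-m)}$ at infinity, which are integrable against $|x|^{-\gamma}$ in the range fixed by~\eqref{parameters}. Hence on any ball centered at the origin whose radius grows like $R(t)$, the weighted local mass of $u(t,\cdot)$ captures at least a constant, explicit fraction of $\Mstar$. This gives an a~priori localization of the mass at each positive time.

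The main obstacle is the second step, namely turning this local mass control into a pointwise lower bound $u(t,x)\ge\underline\kappa>0$ on a suitable annulus. In the unweighted setting this is the classical Aronson--Caffarelli inequality; in the present case one must handle the possibly singular or degenerate weights appearing both in the diffusion operator and in the mass measure. The strategy is to establish, for weak solutions of~\eqref{FDE1}, local energy estimates and a weighted Moser iteration adapted to the asymptotics of the weights near the origin and near infinity. This produces, on one hand, a local $\mathrm L^\infty$--$\mathrm L^{1,\gamma}$ smoothing effect and, on the other hand, a weighted Harnack inequality from which an Aronson--Caffarelli type lower bound on parabolic cylinders is deduced; this is precisely the analysis carried out in~\cite{bonforte2020fine}.

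Once such an interior positivity is available, I would choose a time $t_1\ge\underline t$, a point $x_1$ away from the origin and a radius $r_1$, all explicitly determined by $R(t_1)$ and the lower bound constant $\underline\kappa$, together with parameters $\underline M>0$ and $\underline t>0$ for which $u(t_1,\cdot)\ge B_{\underline M}(t_1-\underline t,\cdot)$ on $B(x_1,r_1)$ while $B_{\underline M}(t_1-\underline t,\cdot)$ is dominated by $u(t_1,\cdot)$ elsewhere (since Barenblatt profiles decay to zero at infinity in the range $m\in[m_1,1)$, a sufficiently small mass $\underline M$ and a sufficient delay $\underline t$ suffice). The comparison principle for~\eqref{FDE1}, valid for weak solutions in this fast diffusion range, then propagates the inequality $u\ge B_{\underline M}(\cdot-\underline t,\cdot)$ for all $t\ge t_1$, and taking $t_1=2\,\underline t$ yields~\eqref{control.below.inq}. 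The constants $\underline t$ and $\underline M$ obtained in this way depend explicitly only on $d$, $m$, $\beta$, $\gamma$ and $A[u_0]$, as required.
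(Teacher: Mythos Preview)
The paper does not give its own proof of this proposition: immediately after stating Propositions~\ref{control.above} and~\ref{control.below} it writes that ``the proof \ldots\ can be found in~\cite{bonforte2020fine}, with another proof in~\cite[Propositions~4.6 and~4.7]{BDNS2021}''. So there is nothing to compare against beyond the cited references, and your sketch is broadly in line with the strategy used there: localize the weighted mass via conservation and the upper bound, upgrade local mass to a pointwise lower bound through a weighted Aronson--Caffarelli/Harnack estimate, and then place a delayed small-mass Barenblatt below the solution by comparison.

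One step in your outline needs more care. In the fast diffusion range $m\in[m_1,1)$ the Barenblatt profile $B_{\underline M}(\tau,\cdot)$ is strictly positive on all of $\R^d$ for every $\tau>0$; it does not have compact support. Hence ``$B_{\underline M}(t_1-\underline t,\cdot)$ is dominated by $u(t_1,\cdot)$ elsewhere'' cannot be concluded from the decay of $B_{\underline M}$ alone, since outside your ball you only know $u\ge 0$. The arguments in~\cite{bonforte2020fine,BDNS2021} close this gap either by first proving instantaneous positivity of $u$ everywhere (infinite speed of propagation) and then matching the Barenblatt tail against the already-established tail behaviour of $u$, or by deriving an Aronson--Caffarelli type pointwise lower bound of the form $u(t,x)\ge C_1\,t^{-a}-C_2\,|x|^{\sigma/(1-m)}\,t^{-b}$ directly, from which the Barenblatt minorant follows without a separate comparison step. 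Your sketch would be correct once this point is made explicit; as written, the final comparison argument has a gap precisely where the fast diffusion case differs from the porous medium case.
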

The proof of Propositions~\ref{control.above} and~\ref{control.below} can be found in~\cite{bonforte2020fine}, with another proof in~\cite[Propositions~4.6 and~4.7]{BDNS2021} which is better adapted to our purposes. Combining the results of Propositions~\ref{control.above} and~\ref{control.below} we obtain a precise control of the solution $u(t,x)$ which is called in the literature a~\emph{global Harnack principle}, see~\cite{MR2282669}. Let us stress that the quantities $\overline{t}$, $\underline{t}$, $\overline{M}$ and $\underline{M}$ can be explicitly computed and their value can be found in~\cite{bonforte2020fine}. We write below their dependencies with respect to the main parameters. In particular, we can chose $\overline{M}=\kappa_1\,\Mstar$ and $\underline{M}=\kappa_2\,\Mstar$ for some positive constants $\kappa_1$ and $\kappa_2$ which depend only on $d$, $m$, $\gamma$, and $\beta$.

\medskip\subsection{Convergence in relative error and the threshold time}

Let us define
\[
\overline{\varepsilon}:=\(\overline{M} /\Mstar\)^{\sigma/\xi}-1\,,\quad \underline{\varepsilon}:=1-\(\underline{M}/\Mstar\)^{\sigma/\xi}\,,\quad\mbox{and}\quad\varepsilon_{m}:=\min\left\{\tfrac12\,,\overline{\varepsilon}\,,\underline{\varepsilon}\right\} 
\]
where $\overline{M}$ and $\underline{M}$ are as in Propositions~\ref{control.above} and~\ref{control.below}. Integrating inequalities~\eqref{control.below.inq} and~\eqref{control.above.inq} over the whole space $\R^d$, we deduce that $\overline{M}>\Mstar$ and $\underline{M}<\Mstar$. As a consequence we obtain that $\overline{\varepsilon}$, $\underline{\varepsilon}$ and $\varepsilon_{m}$ are positive, and $\varepsilon_{m}$ depends only on $d$, $m$, $\beta$ and $\gamma$. 

\subsubsection*{The outer estimate} 
Here we compare a solution $u(t,x)$ with a Barenblatt profile with same mass $\Mstar$ as in~\cite[Section 4.4.1]{BDNS2021}, outside a large ball in $x$ and for large values of $t$. 
\begin{corollary}\label{control.tails}
Under the assumptions of Theorem~\ref{uniform.convergence} and for any $\varepsilon\in\left(0, \varepsilon_{m}\right)$ there exist $\rho(\varepsilon)$ and $T(\varepsilon)$ for which any solution $u$ to~\eqref{FDE1} satisfies
\begin{equation}\label{tail.control.inq}
\(1-\varepsilon\)\,B(t,x)\le u(t,x) \le \(1+\varepsilon\)\,B(t,x)\;\mbox{if}\;|x|\ge R(t)\,\rho(\varepsilon)\;\mbox{and}\;t\ge T(\varepsilon)\,.
\end{equation}
Furthermore, there exist positive constants $\overline{C}$ and $\underline{C}$ such that, for all $x\in\R^d$,
\begin{equation}\label{GHP.Barenblatt.delta}
\underline{C}\,B(t,x)\le u(t,x) \le \overline{C}\,B(t,x)\quad \forall\,t\ge 4\,T(\varepsilon)\,.
\end{equation}
\end{corollary}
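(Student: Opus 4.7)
My plan is to promote the crude Global Harnack sandwich provided by Propositions~\ref{control.above} and~\ref{control.below} into the quantitatively sharper bounds~\eqref{tail.control.inq} and~\eqref{GHP.Barenblatt.delta} by exploiting a scale-invariant feature of the Barenblatt family: in self-similar variables $y=x/R(t)$, all Barenblatt profiles $B_M$ share the same leading power-law tail as $B$, with a mass- and time-shift-dependent prefactor that tends to $1$ in the outer region.

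First I would combine Propositions~\ref{control.above} and~\ref{control.below} to write, for $t\ge 2\,\max\{\overline{t},\underline{t}\}$,
\[
B_{\underline{M}}(t-\underline{t},x)\;\le\;u(t,x)\;\le\;B_{\overline{M}}(t+\overline{t},x)\,,
\]
so the corollary is reduced to comparing these two envelopes with $B(t,x)$. Using the explicit formula $B_M(t,x)=(M/\Mstar)^{\sigma/\xi}\,R(t)^{-(d-\gamma)}\,\mathsf g^{2p}\!\big((M/\Mstar)^{(1-m)/\xi}\,x/R(t)\big)$ with $\mathsf g^{2p}(y)=(1+|y|^\sigma)^{-2p/(p-1)}$, I would expand the ratio $B_M(t+t_0,x)/B(t,x)$ in the tail. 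The algebraic identity $2p\,(1-m)/(p-1)=1$, which follows from $m=(p+1)/(2\,p)$, produces an exact cancellation of the mass prefactor against the rescaled leading power of the tail; only two small corrections survive, one of the form $(1+|y|^{-\sigma})^{\pm 2p/(p-1)}-1=O(|y|^{-\sigma})$ and one of the form $(R(t+t_0)/R(t))^{\pm(d-\gamma)}-1=O(t_0/t)$. Choosing $\rho(\varepsilon)$ so that $\rho^{-\sigma}\lesssim\varepsilon$ and $T(\varepsilon)$ so that $\max\{\overline{t},\underline{t}\}/T(\varepsilon)\lesssim\varepsilon$, with explicit scaling $\rho(\varepsilon)\sim\varepsilon^{-1/\sigma}$ and $T(\varepsilon)\sim\varepsilon^{-1}\,\max\{\overline t,\underline t\}$, then yields~\eqref{tail.control.inq}.

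For the global bound~\eqref{GHP.Barenblatt.delta}, I would split $\R^d$ into the outer region $|x|\ge R(t)\,\rho(\varepsilon)$ and its complement. On the outer region, \eqref{tail.control.inq} with $\varepsilon$ fixed (say equal to $\varepsilon_m/2$) already gives a two-sided bound by constants close to~$1$ times $B(t,x)$. On the complementary inner region, both $B(t,x)$ and the sandwiching envelopes $B_{\overline M}(t+\overline{t},x)$, $B_{\underline M}(t-\underline{t},x)$ are trapped between universal multiples of $R(t)^{-(d-\gamma)}$, with explicit constants depending only on $\rho(\varepsilon)$, $d$, $m$, $\beta$ and $\gamma$, because $\mathsf g^{2p}$ is bounded above and below on a fixed compact set and the factor $4$ in $t\ge 4\,T(\varepsilon)$ ensures $R(t\pm t_0)/R(t)$ is itself between two explicit constants. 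Taking the minimum and maximum of the resulting regional constants yields $\underline{C}$ and $\overline{C}$.

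The main obstacle will be making the tail expansion in the second step genuinely quantitative and uniform in $|x|/R(t)$ on the outer region: one must track the constants $\overline{t}$, $\underline{t}$, $\overline{M}$, $\underline{M}$ produced by the Global Harnack Principle and carry out the algebraic cancellation based on $2p\,(1-m)/(p-1)=1$ cleanly, so that the dependence of $\rho(\varepsilon)$ and $T(\varepsilon)$ on $\varepsilon$ remains explicit. Once the ratio $B_M(t+t_0,x)/B(t,x)$ is expanded uniformly in the outer region, the choice of thresholds and the deduction of~\eqref{GHP.Barenblatt.delta} from~\eqref{tail.control.inq} are routine.
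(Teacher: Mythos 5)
Your proposal follows essentially the same route as the paper: sandwich $u$ between the time-shifted, mass-modified Barenblatt envelopes from Propositions~\ref{control.above} and~\ref{control.below}, rewrite the quotient $B_M(t\pm c,x)/B(t,x)$ explicitly in the self-similar variable $y=x/R(t)$, observe that the mass prefactor cancels against the leading power of the tail (precisely because $\tfrac{2p}{p-1}(1-m)=1$, equivalently the exponent $\tfrac1{1-m}$ in the paper's displayed quotient), and deduce~\eqref{tail.control.inq} from the surviving $O(|y|^{-\sigma})$ and $O(c/t)$ corrections, then split $\R^d$ into the outer region and a compact inner region for~\eqref{GHP.Barenblatt.delta}. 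The only cosmetic difference is your claimed rate $\rho(\varepsilon)\sim\varepsilon^{-1/\sigma}$, which tracks the $\sigma$-dependence of the profile's tail, whereas the paper quotes $\rho(\varepsilon)=O(1/\sqrt{\varepsilon}\,)$ as in the unweighted ($\sigma=2$) case of~\cite{BDNS2021}; this does not affect the correctness of your argument.
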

An explicit expression of $T(\varepsilon)$ and $\rho(\varepsilon)$ can be computed and is not detailed here:  see~\cite[Section 4.4.1]{BDNS2021} for similar computations. We only remark that $\rho(\varepsilon)=O(1/\sqrt{\varepsilon}\,)$ and $T(\varepsilon)=\left(1+A\right)^{1-m}O(1/\varepsilon)$ as $\varepsilon\rightarrow 0$.
\begin{proof}
The proof is based on Propositions~\ref{control.above} and~\ref{control.below} and on the comparison (for $t$ large enough) of $B_M(t\pm c,x)$ with $B(t,x)$, where $c$ can be either $\overline{t}$ or $\underline{t}$ and $M$ either $\overline{M}$ or $\underline{M}$. We observe that the quotient $B_M(t\pm c ,x)/B(t,x)$ can be written as
\[
\frac{B_M(t\pm c ,x)}{B(t,x)}=\left(\frac{\lambda(t\pm c)}{\lambda(t)}\right)^{\gamma-d}\,\left(\frac{1+\lambda(t)^\sigma\,|x|^\sigma}{(M/\Mstar)^\frac{\sigma(1-m)}{\xi}+\lambda(t\pm c )^\sigma\,|x|^\sigma}\right)^\frac{1}{1-m}\,,
\] 
where $\lambda(t)=R(t)^{-1}$. Inequality~\eqref{tail.control.inq} follows from the fact that $\lambda(t\pm c )\sim\lambda(t)$ as $t\rightarrow \infty$. Inequality~\eqref{GHP.Barenblatt.delta} follows from a similar analysis performed directly on the same quotient $B_M(t\pm c ,x)/B(t, x)$.
\end{proof}

\subsubsection*{The inner estimate}
We consider what happens inside a ball as in~\cite[Section 4.4.2]{BDNS2021}.
\begin{corollary}\label{inner.estimate}
Under the assumptions of Theorem~\ref{uniform.convergence} and for any $\varepsilon\in \(0, \varepsilon_{m} \)$ and for any $t\ge 4\, T(\varepsilon)$, there exists a constant $\mathrm{K}>0$ and an exponent $\vartheta>0$ such that any solution $u$ to~\eqref{FDE1} satisfies
\begin{equation}\label{inner.esimate.inq}
\Big|\frac{u(t,x)}{B(t,x)}-1\Big|\le \frac{\mathrm{K}}{\varepsilon^\frac1{1-m}}\(\frac1{t}+\frac{\sqrt{\mathcal{F}[u_0]}}{R(t)}\)^\vartheta\,\quad\mbox{if}\quad |x|\le2\,\rho(\varepsilon)\,R(t)
\end{equation}
\end{corollary}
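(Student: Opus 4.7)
The plan is to adapt the scheme of~\cite[Section~4.4.2]{BDNS2021} to the weighted setting: combine the two-sided Barenblatt sandwich of Corollary~\ref{control.tails} with a parabolic Hölder estimate on the ratio $u/B$ and an $\mathrm{L}^1$-type control coming from the relative entropy, then interpolate. After passing to self-similar variables $y=x/R(t)$, in which $B$ becomes the stationary profile $\mathsf{g}^{2p}$, the Global Harnack bound~\eqref{GHP.Barenblatt.delta} provides a global sandwich $\underline{C}\,\mathsf{g}^{2p}\le \tilde v\le\overline{C}\,\mathsf{g}^{2p}$ valid on all of $\R^d$ for $t\ge 4\,T(\varepsilon)$, with $\underline C$ and $\overline C$ explicit in $\varepsilon$. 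In particular, the rescaled ratio $w:=\tilde v/\mathsf{g}^{2p}$ is uniformly bounded from above and from below by positive constants on the whole space.

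Thanks to this uniform sandwich, the equation for $w$ (obtained by linearizing~\eqref{FDE-CKN} around $\mathsf{g}^{2p}$) is a quasilinear parabolic equation on $\{|y|\le 2\,\rho(\varepsilon)\}$ whose coefficients, while carrying the weights $|y|^{-\beta}$ and $|y|^{-\gamma}$, are uniformly elliptic away from the origin. A De~Giorgi--Nash--Moser/Krylov--Safonov argument adapted to this weighted setting, in the spirit of the regularity theory developed in~\cite{bonforte2020fine}, provides a local Hölder modulus of continuity for $w-1$ with an explicit seminorm bound scaling like $\varepsilon^{-1/(1-m)}$. In parallel, monotonicity of~$\mathcal{F}$ along the flow together with a weighted Csisz\'ar--Kullback--Pinsker inequality (available precisely because $w$ is already pinched between two constants) yields
\begin{equation*}
\int_{\R^d}\bigl|\tilde v(\tau,y)-\mathsf{g}(y)^{2p}\bigr|\,|y|^{-\gamma}\,dy\;\le\;C\,\sqrt{\mathcal{F}[\tilde v(\tau,\cdot)]}\;\le\;C\,\sqrt{\mathcal{F}[u_0]}\,.
\end{equation*}

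Interpolating the Hölder bound against this $\mathrm{L}^1$-type bound via the standard inequality $\|w-1\|_{\mathrm{L}^\infty(B_{2\rho})}\le C\,[w-1]_{C^\alpha}^{\,d/(d+\alpha)}\,\|w-1\|_{\mathrm{L}^1}^{\,\alpha/(d+\alpha)}$, and then refining the argument as in~\cite{BDNS2021} to disentangle the mass-displacement contribution from the parabolic-smoothing contribution, produces an estimate of the form~\eqref{inner.esimate.inq} with some $\vartheta>0$: the factor $\sqrt{\mathcal{F}[u_0]}/R(t)$ originates from the Jacobian of the self-similar change of variables in the $\mathrm{L}^1$ bound, while the additive $1/t$ contribution reflects the degeneration of the parabolic regularity estimate near the initial time. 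The main obstacle I expect is Step~2: one needs the Hölder modulus with constants \emph{explicitly} controlled by $\varepsilon$, and since the weights $|y|^{-\beta},|y|^{-\gamma}$ are singular at $y=0$ one cannot simply invoke the unweighted regularity theory used in~\cite{BDNS2021}. A genuinely weighted parabolic regularity theory is required, and tracking the dependence of its constants on $\overline{C}/\underline{C}$, and hence on $\varepsilon$, is where the bulk of the technical work is concentrated.
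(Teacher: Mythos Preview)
Your overall strategy---global Harnack sandwich, parabolic H\"older regularity, a Csisz\'ar--Kullback--Pinsker $\mathrm L^1$ bound, then interpolation---is the paper's strategy too. Two implementation choices differ, and one explanation is wrong.

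The $1/t$ term does not come from any degeneration of parabolic regularity near the initial time; for $t\ge 4\,T(\varepsilon)$ the equation is uniformly parabolic on the relevant domain. It arises because the relative entropy is computed with respect to the \emph{time-shifted} Barenblatt $B(t+\overline\tau,\cdot)$ (the one equal to $\mathsf g^{2p}$ at $t=0$), whereas the statement compares $u$ with $B(t,\cdot)$. A triangle inequality then brings in $\|B(t+\overline\tau,\cdot)-B(t,\cdot)\|_{1,\gamma}=O(1/t)$; see~\eqref{inq.3}--\eqref{time-shifted-norm}. In your self-similar picture this shift is invisible, and the mechanism you propose would not generate the term.

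The paper estimates the \emph{difference} $\hat u_{t,1}-\hat B_{t,1}$ (after a rescaling that freezes time to $1$, see~\eqref{inq.1}), not the ratio $w$. This makes the $\varepsilon$-bookkeeping much cleaner: the $C^\mu$ seminorm of the difference is bounded on all of $\R^d$ by a constant \emph{independent of} $\varepsilon$, obtained by covering $\R^d$ with dyadic annuli, rescaling each to unit size, and applying the weighted H\"older estimate of~\cite[Proposition~4.2]{Bonforte2019a} on each piece; the series sums, see~\eqref{regularity.norm}. The factor $\varepsilon^{-1/(1-m)}$ then comes \emph{solely} from the lower bound on the denominator $\hat B_{t,1}$ on $\{|y|\le 2Z\rho(\varepsilon)\}$, since $\rho(\varepsilon)\sim\varepsilon^{-1/2}$. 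Your route---H\"older for $w-1$ on an $\varepsilon$-dependent ball---pushes $\varepsilon$ through both the domain size and the division by $\mathsf g^{2p}$ (in the $\mathrm L^1$ term as well), and will typically yield a worse power than the stated $\varepsilon^{-1/(1-m)}$.

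Finally, the weighted parabolic regularity you flag as the main obstacle is already available: \cite[Proposition~4.2]{Bonforte2019a} supplies the local $C^\mu$ estimate with constants depending only on the ellipticity ratio of $A=m\,u^{m-1}$, which is uniformly controlled by~\eqref{GHP.Barenblatt.delta}.
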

\noindent The constants $\mathrm{K}$ and $\vartheta$ are numerical constants, which depend only on $d$, $m$, $\beta$ and~$\gamma$. Their explicit values follow from the proof as in~\cite{BDNS2021}. 
\begin{proof}
The proof follows the proof of~\cite[Proposition~4.1]{BDNS2021} and further properties of parabolic regularity which are detailed in~\cite{BDNS2021}. We have to estimate the term $|u(t,x)/B(t,x)-1|$ in the ball of radius $2\,\rho(\varepsilon)\,R(t)$. This can be done by interpolating its $\LL^\infty$ norm between $\LL^p$ and $C^\mu$ semi-norms, defined on a bounded open domain~$\Omega$~as
\[
\lfloor u\rfloor_{C^\mu\left(\Omega\right)}:=\sup_{\substack{x,y\in\Omega\\x\neq y}}\frac{|u(x)-u(y)|}{|x-y|^\mu}\,,
\]
using the interpolation inequality (see~\cite[Section~3.1.4]{BDNS2021})
\begin{equation}\label{interpolation.inq}
\nrm u{\mathrm L^\infty(B_{R}(x))}\,\le\,C\left(\lfloor u\rfloor_{C^\mu(B_{2R}(x))}^{\frac{(d-\gamma)}{(d-\gamma)+p\,\mu}}\,\|u\|_{\mathrm L^{p,\gamma }(B_{2R}(x))}^{\frac{p\,\mu}{(d-\gamma)+p\,\mu}} + R^{-\frac{d-\gamma}p}\,\|u\|_{\mathrm L^{p,\gamma}(B_{2R}(x))}\right)\,.
\end{equation}
Here $C$ is a positive constant which depends on $d$, $\gamma$, $\mu$ and $p$ and \[
\textstyle\|u\|_{\mathrm L^{p,\gamma }(B_{R}(x_0))}:=\left(\int_{B_R(x_0)} |u|^p\,|x|^{-\gamma}\,\dx\right)^{1/p}\,.
\]

In order to estimate the $C^\mu$ semi-norm of $u(t,x)$ and $B(t,x)$ on the domain $|x|\le\, 2\,\rho(\varepsilon)R(t)$ with $t\ge 4\,T(\varepsilon)$, we have to deal with the time dependence of those functions and the domain itself. In order to simplify the analysis, we introduce the scaling $\hat{u}_{\tau, \kappa}(t,x):= \kappa^{\sigma/(1-m)}\,\tau^{(d-\gamma)/\xi}\,u\big(\tau\,t,\kappa\,\tau^{1/\xi}\,x\big)$ for some positive $\kappa$ and $\tau>0$. If $u$ is a solution to~\eqref{FDE1}, then so is $\hat{u}_{\tau, \kappa}$, for any $\tau, \kappa>0$. The Barenblatt profile is transformed by the previous scaling as $\hat{B}_{\tau, \kappa}(t,x)=B_{\kappa^{\xi/(1-m)}\,\Mstar}\big(t,x\big)$ where $B_{M}(t,x)$ is as in Proposition~\ref{control.above}. The advantage comes from the identity
\begin{equation}\label{inq.1}
 \frac{u(t,x)-B(t,x)}{B(t,x)}=\frac{\hat{u}_{t,1}\big(1, t^{-1/\xi}\,x\big)-\hat{B}_{t,1}\big(1, t^{-1/\xi}\,x\big)}{\hat{B}_{t,1}\big(1, t^{-1/\xi}\,x\big)}\,,
\end{equation}
as the domain $|x|\le 2\,\rho(\varepsilon)\,R(t) $ is included in $|y|\le 2\,Z\,\rho(\varepsilon)$ where $y=x\,t^{-1/\xi}$ for~$Z$ large enough. So, to estimate the $C^\mu$ semi-norm of the quotient $u(t,x)/B(t,x)$, it is enough to consider the right-hand side of~\eqref{inq.1} on a domain which is now independent of the time.

The denominator of the right-hand side of~\eqref{inq.1} can be estimated from below by a direct computation, while we use the parabolic regularity theory developed in~\cite{Bonforte2019a,Simonov2020} to bound the numerator $|\hat{u}_{t,1}(1, y)-\hat{B}_{t,1}(1, y)|$. We remark that the $C^\mu$-norm of $\hat{B}_{t,1}$ can be estimated by a direct computation, while for $\hat{u}_{t,1}$ we use the fact that it is a solution to a linear equation 
\begin{equation}\label{linear.equation}
\frac{\partial u}{\partial t}=|x|^\gamma\,\nabla\cdot\(|x|^{-\beta}\,A(t,x)\,\nabla u\)
\end{equation}
 where the coefficient $A(t,x)=m\,u^{m-1}(t,x)$. To obtain an estimate which is independent of $\varepsilon$, we estimate directly the $C^\mu$-norm of $\hat{u}_{t,1}$ on the whole space $\R^d$. We apply a standard trick in regularity theory: we cover $\R^d$ with subdomains of type $B_k(0) \setminus B_{k/2}(0)$. In order to estimate the norm of $\hat{u}_{t,1}$, we apply to the rescaled function $\hat{u}_{\tau, k}$ the identity
\[
\lfloor \hat{u}_{t, k}(1, \cdot)\rfloor_{C^\mu\left(B_1(0) \setminus B_{1/2}(0)\right)} = k^{\frac\sigma{1-m}+\mu}\,\lfloor \hat{u}_{t, 1}(1, \cdot)\rfloor_{C^\mu\left(B_k(0) \setminus B_{k/2}(0)\right)}\quad\forall\,t>0\,.
\]
By~\eqref{GHP.Barenblatt.delta}, the function $\hat{u}_{\tau, k}$ solves~\eqref{linear.equation} with a bounded and bounded away from zero coefficient $A(t,x)$. Therefore, by~\cite[Proposition~4.2]{Bonforte2019a}, there exists a constant $\overline{c}_1$, which depends only on $d$ and $n$, such that
\begin{equation*}\begin{split}
\lfloor \hat{u}_{t, k}(1, \cdot)\rfloor_{C^\mu\left(B_1(0) \setminus B_{1/2}(0)\right)} &\le \overline{c}_1\,\|\hat{u}_{t, k}\|_{\LL^\infty\left((\frac{1}{2},4)\times B_1(0) \setminus B_{1/2}(0)\right)}\\
&\le\overline{c}_1\,k^\frac\sigma{1-m}\,\|\hat{u}_{t, 1}\|_{\LL^\infty\left((\frac{1}{2},4)\times \R^d\right)}\,.
\end{split}
\end{equation*}
By setting $t_0=0$ and letting $R\rightarrow\infty$ in~\cite[Inequality~(2.1)]{Bonforte2019a}, we obtain the estimate $\|\hat{u}_{t, 1}\|_{\LL^\infty\left((\frac{1}{2},4)\times \R^d\right)}\le \overline{c}_2\,\Mstar$, for some positive constant $\overline{c}_2$ which depends only on $d$, $m$, $\beta$ and $\gamma$. Combining the above estimates, we find that
\begin{equation}\label{regularity.norm}\begin{split}
\lfloor \hat{u}_{t, 1}(1, \cdot)\rfloor_{C^\mu\left(\R^d\right)}&\le \lfloor \hat{u}_{t, 1}(1, \cdot)\rfloor_{C^\mu\left(B_1(0)\right)} + \sum_{k=1}^\infty \lfloor \hat{u}_{t, 1}(1, \cdot)\rfloor_{C^\mu\left(B_{2^{k+1}}(0)\setminus B_{2^k}(0)\right)} \\
&\le \overline{c}_1\,\overline{c}_2\,\frac{2^\mu}{2^\mu-1}\,\Mstar\,. 
\end{split}\end{equation}

In order to use Inequality~\eqref{interpolation.inq}, we need an estimate of $\nrm{u(t,x)-B(t,x)}{1,\gamma}$. To do so, we use the Csisz\'ar-Kullback-Pinsker inequality which allows us to control the evolution of $\nrm{u(t,x)-B(t+\overline{\tau},x)}{1,\gamma}$ where $\overline{\tau}>0$ is a time-shift needed for the definition of the relative entropy. Up to a scaling, $\overline{\tau}$ can be defined to be such that $B(\overline{\tau},x)=\mathsf{g}^{2p}(x)$. It is then convenient to use the triangle inequality as follows
\begin{equation}\label{inq.3}
\nrm{u(t,x)-B(t,x)}{1,\gamma}\le \nrm{u(t,x)-B(t+\overline{\tau},x)}{1,\gamma} + \nrm{B(t+\overline{\tau},x)-B(t,x)}{1,\gamma}\,.
\end{equation}
By a simple although lengthy computation, we have that 
\begin{equation}\label{time-shifted-norm}
\nrm{B(t+\overline{\tau},x)-B(t,x)}{1, \gamma}\le \frac{\overline{c}}{t}\quad\forall\,t\ge T(\varepsilon)\,.
\end{equation}
The Csisz\'ar-Kullback-Pinsker inequality as in~\cite[Lemma 2.12]{BDNS2021} implies that there exists an explicit constant $\mathsf{C}$, which depends only on $d$, $m$, $\beta$ and $\gamma$, such that
\begin{equation}\label{CKP}
\nrm{u(t,x)-B(t+\overline{\tau},x)}{1,\gamma} \le \mathsf{C}\, \frac{\sqrt{\mathcal{F}[u_0]}}{R(t)}\,.
\end{equation}
Thanks to~\eqref{tail.control.inq} it is sufficient to estimate the right-hand side of~\eqref{inq.1} in the domain $|y| \le 2\,Z\,\rho(\varepsilon)$ for $Z$ large enough.
Combining \eqref{interpolation.inq} with $p=1$, applied to the difference $\hat{u}_{t,1}-\hat{B}_{t,1}$ together with the estimates \eqref{inq.3},~\eqref{time-shifted-norm} and~\eqref{CKP}, we obtain for all $|y| \le 2\,Z\,\rho(\varepsilon)$ and $t\ge T(\varepsilon)$
\[
\Big|\hat{u}_{t,1}\big(1, y\big)-\hat{B}_{t,1}\big(1, y\big)\Big|\le \max\{\overline{c},\mathsf{C}\}\(\frac1{t}+\frac{\sqrt{\mathcal{F}[u_0]}}{R(t)}\)^\vartheta
\]
which allows to estimate the numerator of the right-hand side in~\eqref{inq.1}, and allows to conclude the proof of~\eqref{inner.esimate.inq} by estimating the denominator of the right-hand side in~\eqref{inq.1} by a direct computation.
\end{proof}

\medskip\subsection{Proof of Theorem~\texorpdfstring{\ref{uniform.convergence}}{7}}\label{ssec,pf.thm6}
\begin{proof}
From Corollary~\ref{control.tails} we deduce that
\begin{equation}\label{relative.error.control}
\Big|\frac{u(t,x)}{B(t,x)}-1\Big|<\varepsilon
\end{equation}
for $t \ge T(\varepsilon)$ and $|x|>\rho(\varepsilon)\,R(t)$, where $T(\varepsilon)=(1+A)^{1-m}\,O(1/\varepsilon)$. From Corollary~\ref{inner.estimate}, we deduce that inequality~\eqref{relative.error.control} holds if $t>4\,T(\varepsilon)$, $|x|\le 2\,\rho(\varepsilon)\,R(t)$ and $t>0$ is such that 
\[
\frac{\mathrm{K}}{\varepsilon^\frac1{1-m}}\(\frac1{t}+\frac{\sqrt{\mathcal{F}[u_0]}}{R(t)}\)^\vartheta\,<\varepsilon\,.
\]
Since $R(t)\le \left(C t\right)^\frac{1}{\xi}$, for some positive constant $C=C(d,m, \gamma, \beta)$, the last condition is satisfied if 
\begin{equation}\label{time.1}
t\ge \frac{1+\mathcal{F}[u_0]^\frac{\xi}{2}}{\varepsilon^{\a}}\quad\mbox{where}\quad\a:=\frac{\vartheta}{\xi}\,\frac{2-m}{1-m}
\end{equation}
Combining the above estimate~\eqref{time.1} with $T(\varepsilon)=(1+A)^{1-m}\,O(1/\varepsilon)$, by elementary computations, one finds that there exist a computable constant $\mathsf{C}$ which depends on $d$, $m$, $\gamma$, $\beta$, $\mathcal{F}[u_0]$ and $A[u_0]$ for which~\eqref{relative.error} holds for $t\ge\,\mathsf{C}(A[u_0], \mathcal{F}[u_0])\,\varepsilon^{-\a}$. The dependence on $\mathcal{F}[u_0]$ is eliminated using Lemma~\ref{Lem:Entropy-MA}, although more accurate estimates are obtained if the dependence on $\mathcal F[u_0]$ is kept as in~\cite{BDNS2021}. This completes the proof of Theorem~\ref{uniform.convergence}.
\end{proof}

\section{Improved entropy -- entropy production estimates}\label{Sec:Entropy-CKN}

We prove Theorem~\ref{Thm:Improvedrate} in Section~\ref{Sec:Theorem5} using an artificial dimension, entropy methods on the time interval $(t_\star,+\infty)$ with $t_\star$ given by Theorem~\ref{uniform.convergence} and spectral gap estimates, which are exposed respectively in Sections~\ref{ssub:artificial.dimension},~\ref{ssec:self.similar} and~\ref{ssec:gap}.

\medskip\subsection{An artificial dimension}\label{ssub:artificial.dimension}

Inequality~\eqref{CKN} can be recast as an interpolation inequality with same weight in all integrals which, in terms of scaling properties, amounts to introduce an \emph{artificial dimension}. To a function $f\in\mathrm H^p_{\beta,\gamma}(\R^d)$, let us associate the function $F\in\mathrm H^p_{\nu,\nu}(\R^d)$ with $\nu:=d-n<0$ such that
\be{ChangeOfDimension}
f(x)=F\(|x|^{\alpha-1}\,x\)\quad\forall\,x\in\R^d\,,
\ee
where
\[
\alpha=1+\frac{\beta-\gamma}2\quad\mbox{and}\quad n=2\,\frac{d-\gamma}{\beta+2-\gamma}\,.
\]
Notice that $p_\star=n/(n-2)$. In spherical coordinates, with $r=|x|$ and $\omega=x/r$ for any $x\in\R^d\setminus\{0\}$, let us define the derivation operator
\[
\DD U:=\(\alpha\,\frac{\partial U}{\partial r},\frac1r\,\nabla_{\kern-2pt\omega} U\)\,.
\]
With $\alpha>0$ and $p\in(1,p_\star]$, we can rewrite~\eqref{CKN} as
\be{CKN1}
\nrm U{2p,\nu}\le\mathsf K_{\alpha,n,p}\,\nrm{\DD U}{2,\nu}^\theta\,\nrm U{p+1,\nu}^{1-\theta}\quad\forall\,U\in \mathrm H^p_{\nu,\nu}(\R^d)\,,
\ee
for some optimal constant $\mathsf K_{\alpha,n,p}$ which is explicitly related with the optimal constant in~\eqref{CKN}: see~\cite[Proposition~6]{MR3579563}. Inequality~\eqref{CKN1} can be interpreted as a Gagliardo-Nirenberg-Sobolev inequality in the artificial dimension~$n$. As $\alpha\neq1$ unless $\beta=\gamma$, notice that symmetry issues in~\eqref{CKN1} are in no way simpler than in~\eqref{CKN}. A remarkable point is that the Aubin-Talenti type function as defined by~\eqref{Aubin-Talenti} is transformed into the more standard function
\[
x\mapsto\(1+|x|^2\)^\frac1{1-p}\,.
\]
We refer to~\cite[Section~2.3]{MR3579563} and~\cite[Section~3.1]{DEL-2015} for further details.

Through the transformation $u(t,x)=U\(t,|x|^{\alpha-1}\,x\)$, a solution $u$ of~\eqref{FDE1} is transformed into a solution of
\be{FDE2}
\frac{\partial U}{\partial t}=\L U^m
\ee
where $\DDstar$ denotes the adjoint of $\DD$ on $\mathrm L^2(\R^d,|x|^{-\nu}\,dx)$ and $\L=-\,\mathsf D_\alpha^*\,\DD$ is an elliptic self-adjoint differential operator given in spherical coordinates~by
\[
\L u=\alpha^2\(u''+\frac{n-1}r\,u'\)+\frac1{r^2}\,\Delta_\omega\,u\,.
\]
Functions obtained from $B(t,x)$ by reduction to the artificial dimension $n$ are self-similar Barenblatt solutions of~\eqref{FDE2}. If $\mathcal U$ is a solution of~\eqref{FDE2} with initial datum $\mB$ given by~\eqref{BarenblattGNS}, then $\mathcal U(t,x)=R(t)^{-n}\,\mB\big(x/R(t)\big)$ if and only if
\[
\frac{\rd R}{\dt}=\alpha^2\,R^{\,n\,(1-m)-1}\,.
\]
If we additionally assume that $R(0)=1$, this can be solved as
\[
R(t):=(1+\alpha^2\,\xi\,t)^{1/\xi}
\]
and $\xi=n\,(m-m_c)$ and $m_c=(n-2)/n$ as in~\eqref{xi}. Up to a time shift, this definition of $R(t)$ generalizes the definition of Section~\ref{Sec:RelativeError} to the case $\alpha\neq1$. Other self-similar Barenblatt solutions of~\eqref{FDE2} have same scaling properties and initial data given by~$\mB$, up to a multiplication by a constant and a scaling.

\medskip\subsection{Flow and entropies in self-similar variables}\label{ssec:self.similar}

Self-similar solutions suggest to rewrite~\eqref{FDE2} in the corresponding scales using the \emph{self-similar change of variables}
\be{SelfSimilarChangeOfVariables}
U(t,x)=\frac{\muscal^d}{R(t)^d}\,V\(\frac12\,\log R(t),\frac{\muscal\,x}{R(t)}\)
\ee
where $\muscal^{n\,(m-m_c)}=(1-m)/(2\,m)$. Hence if $U$ solves~\eqref{FDE2}, then $V$ solves
\be{Fdr2}
\frac{\partial V}{\partial t}=\DDstar\cdot\(V\(\DD V^{m-1}-2\,x\)\)\,,\quad V(t=0,\cdot)=V_0
\ee
with nonnegative initial datum $V_0=\muscal^{-d}\,U_0(\cdot/\muscal)\in\mathrm L^1(\R^d)$. Using homogeneity and scaling properties of~\eqref{Fdr2}, there is no restriction to fix $\ird{V_0\,|x|^{-\nu}}=\Mstar$, with $\Mstar$ defined by~\eqref{Mstar}. As a consequence, we shall assume from now on that
\[\label{Mass}
\ird{V(t,\cdot)\,|x|^{-\nu}}=\Mstar\quad\forall\,t\ge0
\]
without loss of generality.

The \emph{free energy} (or \emph{relative entropy}) and the \emph{Fisher information} (or \emph{relative entropy production}) are defined respectively by
\[
\mathcal F[V]:=\frac1{m-1}\ird{\(V^m-\mB^m-m\,\mB^{m-1}\,(V-\mB)\)|x|^{-\nu}}
\]
and, as a generalization of definition~\eqref{Fisher} of the Fisher information,
\[
\mathcal I[V]:=\frac m{1-m}\ird{V\,\left|\DD V^{m-1}-\DD\mB^{m-1}\right|^2\,|x|^{-\nu}}\,.
\]
With $V=|f|^{2\,p}$, $p=1/(2\,m-1)$, Inequality~\eqref{CKN1} is equivalent to the \emph{entropy -- entropy production inequality}
\be{entropy.eep}
\mathcal I[V]\ge4\,\alpha^2\,\mathcal F[V]
\ee
\emph{in the symmetry range} for~\eqref{CKN}. If $V$ solves~\eqref{Fdr2}, it is a straightforward computation to check that
\be{EEP-F-I}
\frac \rd{\dt}\mathcal F[V(t,\cdot)]=-\,\mathcal I[V(t,\cdot)]
\ee
after one integration by parts (which has to be justified: see~\cite{DEL-JEPE}), and as a consequence, we obtain that
\begin{equation}\label{standard.dacaying}
\mathcal F[V(t,\cdot)]\le\mathcal F[V_0]\,e^{-\,4\,\alpha^2\,t}\quad\forall\,t\ge0\,.
\end{equation}

\medskip\subsection{An improved spectral gap}\label{ssec:gap}

We consider the \emph{linearized free energy} and the \emph{linearized Fisher information} given respectively by
\[\label{linearized.fisher}
\mathsf F[h]:=\frac m2\ird{|h|^2\,\mB^{2-m}\,|x|^{-\nu}}\quad\mbox{and}\quad\mathsf I[h]:=m\,(1-m)\ird{|\DD h|^2\,\mB\,|x|^{-\nu}}\,.
\]
These quadratic forms are obtained as
\[
\mathsf F[h]=\lim_{\varepsilon\to0}\varepsilon^{-2}\,\mathcal F\big[\mB+\varepsilon\,\mB^{2-m}\,h\big]\quad\mbox{and}\quad\mathsf I[h]=\lim_{\varepsilon\to0}\varepsilon^{-2}\,\mathcal I\big[\mB+\varepsilon\,\mB^{2-m}\,h\big]\,.
\]
The following result is taken from~\cite[Proposition~4]{MR3579563}.
\begin{proposition}\label{Prop:SpectralGap} Let $d\ge2$, $\alpha\in(0,+\infty)$, $\nu=d-n<0$ and $\delta=1/(1-m)\ge n$. Then the \emph{Hardy-Poincar\'e inequality}
\[\label{Hardy-Poincare1}
\ird{|\DD h|^2\,\mB\,|x|^{-\nu}}\ge\Lambda\ird{|h|^2\,\mB^{2-m}\,|x|^{-\nu}}
\]
holds for any $h\in\mathrm L^2(\R^d,\mB^{2-m}\,|x|^{-\nu}\,dx)$ such that $\ird{h\,\mB^{2-m}}=0$, with an optimal constant $\Lambda$ given by
\[\label{Eqn:SpectralGap}
\Lambda=\left\{\begin{array}{rl}
2\,\alpha^2\,(2\,\delta-n)\quad&\mbox{if}\quad0<\alpha^2\le\frac{(d-1)\,\delta^2}{n\,(2\,\delta-n)\,(\delta-1)}\,,\\[6pt]
2\,\alpha^2\,\delta\,\eta\quad&\mbox{if}\quad\alpha^2>\frac{(d-1)\,\delta^2}{n\,(2\,\delta-n)\,(\delta-1)}\,,
\end{array}
\right.
\]
where $\eta$ is given by
\[\label{Eqn:eta}
\eta=\sqrt{\tfrac{d-1}{\alpha^2}+\big(\tfrac{n-2}2\big)^2}-\tfrac{n-2}2=\tfrac2{2+\beta-\gamma}\sqrt{d-1+\big(\tfrac{d-2-\beta}2\big)^2}-\tfrac{d-2-\beta}{2+\beta-\gamma}\,.
\]
\end{proposition}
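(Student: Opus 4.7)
The plan is to reduce the weighted Hardy–Poincaré inequality to a family of one-dimensional Sturm–Liouville problems by spherical harmonic decomposition, compute the bottom of each sector explicitly, and take the minimum over admissible modes.

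First, because the weights $\mB$, $\mB^{2-m}$ and $|x|^{-\nu}$ depend only on $r=|x|$ and $\DD u=(\alpha\,\partial_r u,\,r^{-1}\nabla_\omega u)$, I would expand $h(r,\omega)=\sum_{\ell\ge0}h_\ell(r)\,Y_\ell(\omega)$ in an $\mathrm{L}^2(\S^{d-1})$-orthonormal basis $\{Y_\ell\}$ of spherical harmonics with Laplace–Beltrami eigenvalues $\ell(\ell+d-2)$. Writing $\mB(r)=(1+r^2)^{-\delta}$ with $\delta=1/(1-m)$ and using the identity $|x|^{-\nu}\,\dx=r^{n-1}\,dr\,d\omega$, the Rayleigh quotient of the inequality decouples into the one-dimensional problems
\[
\Lambda_\ell:=\inf_{h_\ell}\frac{\displaystyle\int_0^\infty\!\!\bigl(\alpha^2|h_\ell'|^2+\ell(\ell+d-2)\,r^{-2}|h_\ell|^2\bigr)(1+r^2)^{-\delta}r^{n-1}\,dr}{\displaystyle\int_0^\infty|h_\ell|^2\,(1+r^2)^{-\delta-1}\,r^{n-1}\,dr}\,,
\]
and $\Lambda=\inf_{\ell\ge0}\Lambda_\ell$ subject to the orthogonality constraint.

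Second, for each fixed $\ell$ the change of variable $s=r^2/(1+r^2)\in(0,1)$ transforms the associated Euler–Lagrange equation into a Jacobi-type weighted eigenvalue problem whose spectrum $\{\lambda_{k,\ell}\}_{k\ge0}$ can be analyzed explicitly. Two critical modes govern the global infimum. The first is the \emph{dilation mode} in the $\ell=0$ sector, obtained by differentiating the mass-preserving scaling $\lambda\mapsto\lambda^n\,\mB(\lambda x)$ at $\lambda=1$: this produces the eigenfunction $h_{\mathrm{scal}}(x)=n-(2\delta-n)\,r^2$, which automatically satisfies $\ird{h_{\mathrm{scal}}\,\mB^{2-m}}=0$ and yields, by direct substitution, the eigenvalue $\Lambda_0=2\alpha^2(2\delta-n)$. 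The second is the \emph{translation mode} in the $\ell=1$ sector: an eigenfunction with the prescribed behaviour $h_1(r)\sim r^\eta$ as $r\to0$ must satisfy the indicial equation $\alpha^2\,\eta\,(\eta+n-2)=d-1$, whose positive root is precisely the $\eta$ in the statement, and one then checks that this ansatz extends to a globally integrable eigenfunction with eigenvalue $\Lambda_1=2\alpha^2\delta\,\eta$. A monotonicity argument in the Jacobi formulation shows that for $\ell\ge2$, and for higher excited states $k\ge1$ in the $\ell=0,1$ sectors, all eigenvalues are strictly larger than $\min(\Lambda_0,\Lambda_1)$.

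Finally, the orthogonality condition $\ird{h\,\mB^{2-m}}=0$ eliminates the zero eigenvalue of the $\ell=0$ sector (the constant eigenfunction) and no other mode, so $\Lambda=\min(\Lambda_0,\Lambda_1)$. A short computation shows that $\Lambda_0\le\Lambda_1$ is equivalent to $(2\delta-n)/\delta\le\eta$, and substituting the explicit expression for $\eta$ and squaring reduces this (after algebraic simplification using $\delta\ge n$) to the threshold $\alpha^2\le (d-1)\,\delta^2/[n\,(2\delta-n)\,(\delta-1)]$, giving the dichotomy claimed. The main obstacle is the $\ell=1$ analysis: when $\alpha\ne1$ the translation mode is no longer a pure spatial shift of $\mB$, so the eigenfunction and its eigenvalue must be produced directly from the Jacobi/indicial analysis, and one must verify both that the positive root of the indicial equation yields a genuine globally integrable eigenfunction on $(0,\infty)$ and that no higher excited state in any sector slips below the two critical eigenvalues—this is where the assumption $\delta\ge n$ enters the argument.
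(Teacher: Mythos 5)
The paper does not prove this proposition; it cites it as Proposition~4 of the reference \cite{MR3579563} (Bonforte--Dolbeault--Muratori--Nazaret), and your strategy --- spherical-harmonic decomposition, dilation mode in the $\ell=0$ sector, translation-type mode in the $\ell=1$ sector, comparison of the two candidate constants --- is precisely the one used there. Your algebra is also correct: substituting $h_1(r)=r^\eta$ with $\alpha^2\,\eta\,(\eta+n-2)=d-1$ into the $\ell=1$ radial Euler--Lagrange equation makes the principal and centrifugal terms cancel, and what remains is exactly $2\,\alpha^2\,\delta\,\eta$ times the weight, so $r^\eta$ is an \emph{exact} solution and not merely the indicial behaviour at the origin; and the condition $(2\delta-n)/\delta\le\eta$, after squaring and using $(2\delta-n)/\delta+(n-2)=n\,(\delta-1)/\delta$, reduces to $\alpha^2\le(d-1)\,\delta^2/\big(n\,(2\delta-n)\,(\delta-1)\big)$ as claimed.

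One refinement of your outline is needed rather than just a ``verification'': $r^\eta$ belongs to the weighted $\mathrm L^2$ space only when $\eta<\delta+1-\tfrac n2$, and this fails for $\alpha$ small (i.e., $\eta$ large). You should observe that this can only happen when $\alpha^2$ is below the stated threshold, where $\Lambda_0<\Lambda_1$ and the optimal constant is realized by the $\ell=0$ dilation mode; in general what one must show is that the infimum of the Rayleigh quotient over the $\ell=1$ sector equals $\Lambda_1$ whether or not it is attained. Likewise, the statement that the $\ell\ge2$ sectors and the higher radial excitations all lie above $\min(\Lambda_0,\Lambda_1)$ is the technical crux; in the cited reference it is established via the explicit Jacobi/Gegenbauer spectrum of the rescaled operator, not by a soft monotonicity argument, and it is exactly where the hypothesis $\delta\ge n$ (equivalently $m\ge m_1$) is used.
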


\medskip\subsection{Proof of Theorem~\texorpdfstring{\ref{Thm:Improvedrate}}{1.2}}\label{Sec:Theorem5}

The constant in the entropy -- entropy production inequality~\eqref{entropy.eep} can be improved for a solution to~\eqref{Fdr2} if time is large enough and the initial datum $v_0$ is such that $A[v_0] < \infty$.
\begin{lemma}\label{improved.eep.lem} Assume that $(\beta,\gamma)\neq(0,0)$ satisfies~\eqref{condition.eta}. Let $\alpha$ and $\nu$ be as in Section~\ref{ssub:artificial.dimension} and define $\zeta:=\left(2\,(1-m)\,\Lambda-4\,\alpha^2\right)/4>0$ with $\Lambda$ as in Proposition~\ref{Prop:SpectralGap}. If $V$ is a solution to~\eqref{Fdr2} with an initial datum $V_0$ such that
\[
\sup_{R>0}R^{\frac{n(m-m_c)}{1-m}}\int_{|x|>R} V_0(x)\,|x|^{-\nu} \,\dx <\infty
\]
where $m_c=(n-2)/n$ as in~\eqref{xi}, then there exists $T>0$ such that
\begin{equation}\label{improved.eep}
\left( 4\,\alpha^2+ 2\,\zeta \right) \mathcal{F}[V(t,\cdot)] \le\,\mathcal I[V(t,\cdot)]\quad \forall\,t\ge T\,.
\end{equation}
\end{lemma}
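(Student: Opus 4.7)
The plan is to combine Theorem~\ref{uniform.convergence} with a quantitative linearization of $\mathcal F$ and $\mathcal I$ around the Barenblatt profile $\mB$, and then to invoke the Hardy-Poincar\'e spectral gap of Proposition~\ref{Prop:SpectralGap}. First, the hypothesis that $A[V_0]$ is finite is exactly the tail condition of Theorem~\ref{uniform.convergence} (after the changes of variables of Sections~\ref{ssub:artificial.dimension}--\ref{ssec:self.similar}), so that theorem yields, for every $\varepsilon\in(0,\varepsilon_\star)$, a threshold time $T=T(\varepsilon)$ such that
\[
\sup_{x\in\R^d}\Big|\frac{V(t,x)}{\mB(x)}-1\Big|\le\varepsilon\qquad\forall\,t\ge T.
\]

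Next, I would parametrize the deviation by writing $V=\mB+\mB^{2-m}h$, so that $|h|\le\varepsilon\,\mB^{m-1}$ pointwise. Mass conservation yields the orthogonality $\ird{h\,\mB^{2-m}\,|x|^{-\nu}}=0$ required by Proposition~\ref{Prop:SpectralGap}. A pointwise second-order Taylor expansion of the integrands of $\mathcal F[V]$ and $\mathcal I[V]$ around $V=\mB$ isolates the leading terms $\mathsf F[h]$ and $\mathsf I[h]$; thanks to the uniform relative bound above, the cubic remainders are controlled pointwise by $\varepsilon$ times the quadratic leading integrands, so that
\[
\mathcal F[V(t,\cdot)]=\bigl(1+O(\varepsilon)\bigr)\,\mathsf F[h(t,\cdot)],\qquad\mathcal I[V(t,\cdot)]=\bigl(1+O(\varepsilon)\bigr)\,\mathsf I[h(t,\cdot)]
\]
uniformly for $t\ge T$.

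Proposition~\ref{Prop:SpectralGap} then gives $\mathsf I[h]\ge 2(1-m)\Lambda\,\mathsf F[h]$, the prefactor $2(1-m)$ coming from matching the normalizations of $\mathsf F$ and $\mathsf I$. Since the assumption $\zeta>0$ rewrites as $(1-m)\Lambda>2\alpha^2$, the target constant $4\alpha^2+2\zeta=2\alpha^2+(1-m)\Lambda$ is strictly smaller than $2(1-m)\Lambda$, leaving room for the error. Taking $\varepsilon$ small enough (and $T=T(\varepsilon)$ correspondingly large) in
\[
\mathcal I[V]\ge\frac{1-O(\varepsilon)}{1+O(\varepsilon)}\cdot 2(1-m)\Lambda\cdot\mathcal F[V]
\]
then produces~\eqref{improved.eep}.

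The main obstacle is the quantitative control of the remainder in the expansion of $\mathcal I[V]$: the third-order terms in the expansion of $V\,|\DD V^{m-1}-\DD\mB^{m-1}|^2$ are cubic in $h$ and in $\DD h$, so bounding them pointwise against $\mB\,|\DD h|^2$ requires not only the uniform bound on $h/\mB^{m-1}=V/\mB-1$ but also a matching bound on the weighted derivative. This in turn relies on the H\"older regularity of $V(t,\cdot)$ already exploited in Corollary~\ref{inner.estimate} through~\eqref{regularity.norm}, combined with the two-sided Barenblatt bounds of Propositions~\ref{control.above} and~\ref{control.below}, which guarantee that all the weighted moments appearing in the remainder estimates are finite and uniformly bounded for $t\ge T$.
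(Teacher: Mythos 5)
Your general scheme is the same as the paper's: apply Theorem~\ref{uniform.convergence} to get uniform convergence in relative error for $t\ge T(\varepsilon)$, parametrize $V=\mB+\mB^{2-m}h$ so that mass conservation gives the orthogonality needed by Proposition~\ref{Prop:SpectralGap}, linearize $\mathcal F$ and $\mathcal I$ around $\mB$, and combine with the Hardy--Poincar\'e inequality. That much is right, and the expansion of the entropy side is also fine: the comparison $\mathcal F[V]\approx\mathsf F[h]$ up to $(1\pm\varepsilon)^{m-2}$ factors only uses the $\LL^\infty$ relative error bound, which is exactly~\cite[Lemma~15]{Bonforte201761}.

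The gap is on the Fisher information side, and you half-see it yourself. You assert
\[
\mathcal I[V(t,\cdot)]=\bigl(1+O(\varepsilon)\bigr)\,\mathsf I[h(t,\cdot)]\,,
\]
and then acknowledge that the cubic remainders in the expansion of $V\,|\DD V^{m-1}-\DD\mB^{m-1}|^2$ are cubic in $\DD h$, so that the $\LL^\infty$ relative error bound alone is not enough. Your proposed remedy --- appealing to the H\"older regularity~\eqref{regularity.norm} and the two-sided Barenblatt bounds --- does not close this gap: a uniform $C^\mu$ seminorm bound with $\mu<1$ does \emph{not} give a pointwise control of $\DD h$ against $\DD\mB^{m-1}$ or against $\mB\,|\DD h|^2$ in the weighted integrals, and the argument would require a genuine gradient estimate in relative form, which is not proved (and not needed).

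The paper avoids this entirely by replacing the two-sided approximation of $\mathcal I$ with the \emph{one-sided} estimate of~\cite[Lemma~18]{Bonforte201761},
\[
\mathsf I[h(t,\cdot)]\le\frac{(1+\varepsilon)^{3-2m}}{1-\varepsilon}\,\mathcal I[V(t,\cdot)]+\varepsilon\,s_\varepsilon\,\mathsf F[h(t,\cdot)]\,,
\]
where the error is absorbed by the \emph{entropy} quadratic form $\mathsf F[h]$, not by $\mathsf I[h]$. That lemma is proved purely from the $\LL^\infty$ relative error bound via pointwise convexity inequalities and requires no derivative control. It is also exactly the direction one needs: one wants to lower bound $\mathcal I[V]$ in terms of $\mathsf I[h]$, hence an upper bound on $\mathsf I[h]$ suffices, and the stray $\mathsf F[h]$ term is then absorbed using Hardy--Poincar\'e and Lemma~15. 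Finally, the paper does not stop at ``$\zeta>0$ leaves room'': it checks separately the two regimes of $\Lambda$ in Proposition~\ref{Prop:SpectralGap} (the $2\,\alpha^2(2\delta-n)$ branch, using $m>m_1$, and the $2\,\alpha^2\delta\eta$ branch, where $\eta>1$ is equivalent to~\eqref{condition.eta}) to guarantee that the improvement survives the $\varepsilon$-perturbation. Your proposal omits this case discussion, but that is a minor point compared to the missing one-sided Fisher information estimate, which is the crux of the proof.
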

\begin{proof} By applying the change of variables~\eqref{ChangeOfDimension} and~\eqref{SelfSimilarChangeOfVariables}, solutions to~\eqref{FDE-CKN} are tranformed in solutions to~\eqref{Fdr2} and in particular the shifted time-dependent Barenblatt profile $B(t+\overline{\tau},x)$ is transformed into the stationary solution~$\mB$ defined in~\eqref{BarenblattGNS}. As in Section~\ref{Sec:RelativeError}, $\overline{\tau}>0$ is such that $B(\overline{\tau},x)=\mathsf{g}^{2p}(x)$ and $B$ is as in~\eqref{Barenblatts}. Since
\[
\left\|\frac{B(t,x)}{B(t+\overline{\tau}, x)}\right\|_{\mathrm{L}^\infty(\R^d)}\le c_1\quad\mbox{and}\quad\left\|\frac{B(t+\overline{\tau}, x)-B(t,x)}{B(t+\overline{\tau}, x)}\right\|_{\mathrm{L}^\infty(\R^d)}\le \frac{c_2}{t}
\]
for $t>0$ large enough as a consequence of Theorem~\ref{uniform.convergence}, we find that for any $\varepsilon>0$ small enough there exists an explicit $T=T(\varepsilon)>0$ such that
\[\label{relative.error1}
\sup_{x\in\R^d}\left|\frac{V(t,x)}{\mB(x)}-1\right|\le \varepsilon\quad\forall\,t\ge T\,.
\]
According to~\cite[Lemma~18]{Bonforte201761}, one can deduce that
\[
\mathsf I[h(t,\cdot)]\le \frac{(1+\varepsilon)^{3-2m}}{(1-\varepsilon)}\,\mathcal I[V(t,\cdot)] +\,\varepsilon\,s_\varepsilon\,\mathsf F[h(t,\cdot)]\,,
\]
where $h(t,\cdot)=V(t,\cdot)\,\mB^{m-2}-\mB^{m-1}$ and $s_\varepsilon$ is a positive function of $\varepsilon$ such that $\lim_{\varepsilon\to0}s_\varepsilon>0$. As in the proof of~\cite[Lemma~19]{Bonforte201761}, using Proposition~\ref{Prop:SpectralGap} and the fact that $(1+\varepsilon)^{m-2}\le \mathcal F[V(t,\cdot)] / \mathsf F[h(t,\cdot)]\le (1-\varepsilon)^{m-2}$ by~\cite[Lemma~15]{Bonforte201761}, we obtain 
\[
\big(2\,(1-m)\,\Lambda -\,\rho_\varepsilon\,\varepsilon\big)\,\mathcal{F}[V(t,\cdot)] \le\,\mathcal I[V(t,\cdot)]\,,
\]
where $\rho_\varepsilon$ is bounded and stays bounded as $\varepsilon\rightarrow0$. A discussion has to be made depending on the cases in Proposition~\ref{Prop:SpectralGap}. When $2\,(1-m)\,\Lambda = 4\,\alpha^2\,(1-m)\,(2\,\delta - n)$, it follows from $m>m_1$ that $(1-m)\,(2\,\delta - n)>1$ and we find $2\,(1-m)\,\Lambda -\,\rho_\varepsilon\,\varepsilon > 4\,\alpha^2 + \zeta$ for $\varepsilon$ small enough. When $2\,(1-m)\,\Lambda = 4\,\alpha^2\,\eta$, the result follows because the condition $\eta>1$ is equivalent to~\eqref{condition.eta}.
\end{proof}

\begin{proof}[Proof of Theorem~\ref{Thm:Improvedrate}]
As a consequence of~\eqref{improved.eep} and~\eqref{EEP-F-I}, we have
\[
\frac \rd{\dt}\mathcal{F}[V(t,\cdot)]\le -\,(4\,\alpha^2+\zeta)\,\mathcal{F}[V(t,\cdot)]\,.
\]
Inequalities~\eqref{improved.rate.CKN} and~\eqref{inq.to.prove} are consequences of Gr\"{o}nwall's lemma.
\end{proof}

\section{A stability result for GNS}\label{Sec:StabGNS}

In this section, we deal with the non-weighted case $(\beta,\gamma)=(0,0)$. Detailed proof are given in~\cite{BDNS2021}. Here we simply outline the main steps of the proofs and emphasize the differences with the weighted case studied in Section~\ref{Sec:Entropy-CKN}.

\medskip\subsection{A quotient estimate}\label{sec:quotient-estimate}

For any function $v\neq\mB$ such that $\ird v=\Mstar$, with~$\Mstar$ defined by~\eqref{Mstar}, which is smooth enough and sufficiently decaying at infinity let us consider the quotient
\[\label{ch2:Q}
\mathcal Q[v]:=\frac{\mathcal I[v]}{\mathcal F[v]}
\]
where $\mathcal F[v]$ and $\mathcal I[v]$ are defined respectively by~\eqref{Entropy} and~\eqref{Fisher}.
\begin{lemma}\label{Lem:Q} Assume that $v$ solves~\eqref{FDr}. Then we have
\[\label{EqQ}
\frac \rd{\dt}\mathcal Q[v(t,\cdot)]\le\mathcal Q\,(\mathcal Q-4)\quad\forall\,t\ge0\,.
\]
\end{lemma}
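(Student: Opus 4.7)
The plan is to reduce the stated differential inequality to the classical \emph{carr\'e du champ} estimate
\[
\tfrac{\rd}{\dt}\mathcal I[v(t,\cdot)]\le-4\,\mathcal I[v(t,\cdot)]
\]
along the flow~\eqref{FDr}, and then to derive this estimate from a Bochner--type identity written in pressure variables.

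The reduction is immediate. Since $\frac{\rd}{\dt}\mathcal F[v(t,\cdot)]=-\,\mathcal I[v(t,\cdot)]$ by~\eqref{Fisher}, a direct computation gives
\[
\tfrac{\rd}{\dt}\mathcal Q[v(t,\cdot)]=\frac{\mathcal I'}{\mathcal F}+\mathcal Q^2\,.
\]
Provided $\mathcal F[v(t,\cdot)]>0$, the desired estimate $\mathcal Q'\le\mathcal Q\,(\mathcal Q-4)$ is thus equivalent to the differential inequality $\mathcal I'\le-4\,\mathcal I$ on the Fisher information alone (the stationary case $v\equiv\mathcal B$ being trivial).

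For the main carr\'e du champ step, I would use the identity $\nabla\mathcal B^{m-1}=2\,x$ coming from~\eqref{BarenblattGNS} to recast~\eqref{FDr} as the Fokker--Planck type equation $\partial_t v=\nabla\!\cdot\!\bigl(v\,\nabla(v^{m-1}-\mathcal B^{m-1})\bigr)$, introduce the relative pressure $P:=\tfrac{m}{m-1}\,(v^{m-1}-\mathcal B^{m-1})$, and rewrite $\mathcal I[v]=\tfrac{1-m}{m}\ird{v\,|\nabla P|^2}$. Differentiating this expression in time along the flow, substituting the equation, and integrating by parts twice, I would reorganize the resulting terms via the pointwise Bochner identity to produce an identity of the shape
\[
\tfrac12\,\tfrac{\rd}{\dt}\mathcal I[v(t,\cdot)]+2\,\mathcal I[v(t,\cdot)]=-\ird{v\,\mathsf R[P]}\,,
\]
where $\mathsf R[P]\ge0$ is a quadratic form in the Hessian of~$P$ (essentially a squared traceless Hessian plus a nonnegative remainder that is well-signed in the subcritical range $m\in(m_1,1)$). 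The constant~$4$ in the statement then emerges as a Bakry--Emery curvature contribution, arising from the factor~$2$ in the confinement term $2\,\nabla\!\cdot\!(x\,v)$ of~\eqref{FDr}.

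The algebraic manipulation above is classical and goes back to Carrillo--Toscani. The delicate point, which I expect to be the main obstacle, will be its rigorous justification: one needs the time derivative of $\mathcal I$ to be finite and the integrations by parts to be legitimate, which requires enough integrability and regularity of $v$, $\nabla P$ and $\mathrm{Hess}\,P$ at infinity, together with a smoothing of the initial datum so that $\mathcal I$ is differentiable at $t=0$. In the subcritical fast diffusion range, I would handle these points by regularizing $v_0$ and by combining a cut-off argument with the global Harnack bounds recalled in Section~\ref{Sec:RelativeError}, which trap $v(t,\cdot)$ between two Barenblatt profiles for $t$ large enough.
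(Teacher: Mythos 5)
Your proposal is correct and follows essentially the same route as the paper: the chain-rule identity $\frac{\rd}{\dt}\mathcal Q=\frac{\mathcal I'}{\mathcal F}+\mathcal Q^2$ (using $\mathcal F'=-\mathcal I$) reduces the claim to the carr\'e du champ estimate $\mathcal I'\le-4\,\mathcal I$, which the paper likewise invokes, citing~\cite{DEL-JEPE,Carrillo2001} rather than re-deriving the Bochner/pressure-variable computation you sketch. One small slip: from~\eqref{FDr} and $\nabla\mathcal B^{m-1}=2\,x$ the Fokker--Planck form reads $\partial_t v=-\,\nabla\!\cdot\!\bigl(v\,\nabla(v^{m-1}-\mathcal B^{m-1})\bigr)$ (opposite sign to what you wrote), though this does not affect the argument.
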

Equation~\eqref{FDr} corresponds to the fast diffusion equation without weights, \emph{i.e.}, \hbox{$\beta=\gamma=0$} and the result follows from
\[\label{Eqn:Fisher}
\frac \rd{\dt}\mathcal I[v(t,\cdot)]\le-\,4\,\mathcal I[v(t,\cdot)]\,,
\]
which itself arises from the \emph{carr\'e du champ} method adapted to nonlinear flows. See for instance~\cite{DEL-JEPE} and references therein. With $(\beta,\gamma)\neq(0,0)$, such an estimate is so far formal.

\medskip\subsection{Scheme of the proof of Theorem~\texorpdfstring{\ref{Thm:BDNS}}{1.1}}
In the absence of weights the result follows by considering the improved spectral gap obtained for the flow~\eqref{FDr}. We shall only sketch the main steps of the proof and the interested reader may find the whole proof in~\cite[Chp. 5]{BDNS2021}. We first consider the subcritical case $1<p<p^\star$.

\smallskip
\noindent\textit{Step 0: normalization.} We notice that the deficit functional $\delta[\cdot]$ defined in~\eqref{deficit} is invariant by translations so that, without loss of generality, we assume that $|f|^{2p}$ has zero center of mass. As well, there is no harm to assume $\ird{|f|^{2p}}=\ird{\mathsf g^{2p}}$ where $\mathsf{g}$ is as in~\eqref{Aubin-TalentiGNS}, since the general case can be recover by scalings, see~\cite[Chapter~5]{BDNS2021}. We learn from~\cite[Lemma 1.12]{BDNS2021} that
\[
\frac{p+1}{p-1}\,\delta[f]=\mathcal I[v]-4\,\mathcal{F}[v]
\]
with the notation $v=|f|^{2p}$ and $p=1/(2\,m-1)$. Notice that translations are not allowed if $(\beta,\gamma)\neq(0,0)$, but that $\ird{x\,|f|^{2p}}=0$ is not required in that case.

\smallskip
\noindent\textit{Step 1: improved entropy -- entropy production inequality in the asymptotic time layer.} Let $v(t)$ be the solution the Cauchy problem~\eqref{FDr} with initial datum $v(0)=|f|^{2p}$. By Theorem~\ref{uniform.convergence} we know that, for $\varepsilon>0$ small enough, the relative error satisfies $|(v(t,\cdot)-\mathsf{g}^{2p})/\mathsf{g}^{2p}|<\varepsilon$ for any $t>t_\star$ where $t_\star$ is as in~\eqref{relative.error}. So, we are in the position of using Lemma~\ref{improved.eep.lem}: under the center of mass condition, the improved entropy -- entropy production inequality
\begin{equation}\label{improved.intial}
\mathcal I[v(t,\cdot)]\ge (4+\zeta)\,\mathcal{F}[v(t,\cdot)] \quad \forall\,t\ge t_\star
\end{equation}
holds with $\zeta=2\,d\,(m-m_1)$. The computation of the precise value of $\zeta$ can be found in~\cite[Proposition 2.10]{BDNS2021}. The additional constraint $\ird{x\,v_0}=0$ is needed to have $\zeta>0$.

\smallskip
\noindent\textit{Step 2: improved entropy -- entropy production inequality in the initial time layer.}
By integrating the differential inequality of Lemma~\ref{Lem:Q} backwards on $[0,t_\star]$, under the assumption that $Q(v(t_\star))\ge 4+ \zeta$, one finds 
\[
\mathcal I[v(t,\cdot)]\ge (4+\mu)\,\mathcal{F}[v(t,\cdot)]\quad\forall\,t\in[0,t_\star]\quad\mbox{where}\quad\mu=\frac{4\,\zeta\,e^{-4\,t_\star}}{4+\zeta-\zeta\,e^{-4\,t_\star}}\,.
\]
As a consequence, the improvement obtained in the asymptotic time layer $[t_\star,+\infty)$ can be transferred to the initial time layer $[0,t_\star]$ and up to the initial datum, with a smaller improvement of the constant. By multiplying the improved inequality by $4/(4+\mu)$, we obtain 
\[
\mathcal I[v]-4\,\mathcal{F}[v] \ge \frac{\mu}{4+\mu}\,\mathcal I[v]
\]
with $v=v(t,\cdot)$ for any $t\ge0$ and, as a special case, for $v=v_0$. Notice that the constant $\mu/(4+\mu)$ can be estimated explicitly since the dependence of $t_\star$ is given by~\eqref{relative.error}.

\smallskip
\noindent\textit{Step 3: rescaling and proof of Inequality~\eqref{Ineq:StabFisher}.} So far we have proven, with the above notation, that
\[
\frac{p+1}{p-1}\,\delta[f] \ge \mathcal{C}\,\mathcal I[v]\,,
\]
where $\mathcal{C}=4/(4+\mu)$.
To obtain inequality~\eqref{Ineq:StabFisher} we simply observe that 
\[
\mathcal I[v]\ge \inf_{\varphi\in\mathfrak M}\ird{\big|(p-1)\,\nabla f+f^p\,\nabla\varphi^{1-p}\big|^2}
\]
where the infimum is taken on the manifold of all optimal functions for~\eqref{GNS}. To obtain the result in its general form it is enough to rescale and go back to the original variables.

\smallskip
\noindent\textit{Step 4: the critical case $p=p^\star$.}
Due to the lack of an improved spectral gap in Step~1 under the previous scheme, with mass and center of mass constraints, a second moment constraint is also needed. This amounts to change the evolution equation to a new one which allows to control the evolution of the second moment as well: in practice we need to rescale Equation~\eqref{FDr} as done in~\cite[Chapter~5]{BDNS2021}. Entropy and entropy production should be optimized, \emph{i.e.}, considered with respect to the \emph{best matching Barenblatt} profile which is $\mathsf{g}^{2p}$ up to a time-dependent rescaling.This rescaling amounts to an additional time shift. Explicit estimates of the time shift requires some more work based on a system of ODEs and a phase portrait analysis. For more details see~\cite[Chapter~6]{BDNS2021}.

\medskip\subsection{Proof of Corollaries~\texorpdfstring{\ref{Cor:Improvedrate}}{1} and~\texorpdfstring{\ref{Cor:Improvedrate2}}{2}}
\begin{proof}[Proof of Corollary~\ref{Cor:Improvedrate}]
The entropy of the initial datum $\mathcal{F}[v_0]$ is finite under the current assumptions as a consequence of Lemma~\ref{Lem:Entropy-MA}. The proof of inequality~\eqref{improved.rate.GNS} follows from identity~\eqref{EEP-F-I} by combining a Gr\"{o}nwall argument with inequality~\eqref{improved.intial}. In this way we obtain
\begin{equation}\label{inq.ee}
\mathcal{F}[v(t,\cdot)]\le \mathcal{F}[v_0]\,e^{-\,(4+\zeta)\,t}\quad\forall\,t\ge0\,.
\end{equation}
\end{proof}
\begin{proof}[Proof of Corollary~\ref{Cor:Improvedrate2}]
Let us consider the sub-critical case $m_1<m<1$. We can obtain inequality~\eqref{standard.dacaying} from~\eqref{inq.ee} as follows. First, we can rescale the initial datum in such a way that $\mathcal{F}[v_0]=\mathcal{F}_\star[v_0]$. Then it is enough to take the infimum over all the Barenblatt profiles in the left-hand side of~\eqref{inq.ee}.

In the critical case $m=m_1$, we have to deal with an additional time-dependent scaling in order to control the evolution of the second moment. For more details see~\cite[Chapter~5]{BDNS2021}.
\end{proof}

\section{A conjecture on stability for Caffarelli-Kohn-Nirenberg inequalities}\label{Sec:Conjecture-CKN}

In Gagliardo-Nirenberg-Sobolev inequalities~\eqref{GNS} and in Caffarelli-Kohn-Nirenberg inequalities~\eqref{CKN}, the invariance under scalings plays an important role, as well as in the corresponding fast diffusion equations~\eqref{FD} and~\eqref{FDE1}. This explains why entropy methods are so efficient to get sharp results for the best constants as discussed in~\cite{DEL-JEPE}. A key feature is the \emph{carr\'e du champ} method, which has been rigorously implemented for~\eqref{FD} in~\cite{Carrillo2001}, in the context of parabolic equations. The regularity and decay estimates needed to justify the \emph{carr\'e du champ} method in the context of the fast diffusion flow associated with~\eqref{CKN} are so far missing, although some partial estimates are known from~\cite{DEL-JEPE,dolbeault2022parabolic}. This is why the symmetry results in~\cite{DEL-2015} were based on an elliptic version of the method, which formally also applies to parabolic equation~\eqref{FDE1}. Proving that all necessary integrations by parts can be justified would establish the following conjecture:
\begin{center}\emph{For some $\zeta>0$, Inequality~\eqref{improved.rate.CKN} holds for any $t\ge0$.}\end{center}
In other words, this means that in Theorem~\ref{Thm:Improvedrate}, one can take $T=0$. Indeed, the \emph{carr\'e du champ} estimate would allow us to extend the estimate on the asymptotic time layer $(T,+\infty)$ to the initial time layer $(0,T)$ and find a smaller but still constructive value for $\zeta$ depending only on the initial datum, by the same scheme as in~\cite{BDNS2021}. A straightforward consequence would be an improved entropy -- entropy production inequality that would provide us with a \emph{stability result with a constructive estimate} for Caffarelli-Kohn-Nirenberg inequalities~\eqref{CKN} similar to the result of Theorem~\ref{Thm:BDNS}. With this method, the stability would be measured by a relative Fisher information as for~\eqref{GNS}. Such a result is to be expected as, in the critical case of~\eqref{CKN}, a stability result \emph{without} constructive estimate has already been established by F.~Seuffert in~\cite{MR3695890}.

\section*{Acknowledgments}
\small M.B.~has been supported by the Projects \hbox{MTM2017-85757-P} and PID2020-113596GB-I00 (Ministry of Science and Innovation, Spain) and the Spanish Ministry of Science and Innovation, through the ``Severo Ochoa Programme for Centres of Excellence in R\&D'' (CEX2019-000904-S) and by the E.U.~H2020 MSCA programme, grant agreement 777822. J.D.~has been supported by the Project EFI (ANR-17-CE40-0030) of the French National Research Agency (ANR). N.S.~has been supported by the Spanish Ministry of Science and Innovation, through the FPI-grant BES-2015-072962, associated to the project MTM2014-52240-P (Spain) and by the E.U. H2020 MSCA programme, grant agreement 777822, by the Project EFI (ANR-17-CE40-0030) of the French National Research Agency (ANR), by the DIM Math-Innov of the Region \^Ile-de-France and by the ``Fondation Math\'ematique Jacques Hadamard". The authors thank an anonymous referee for his detailed reading, comments and suggestions.
\\
\copyright~2022 by the authors. This paper may be reproduced, in its entirety, for non-commercial purposes.
\bibliographystyle{aims}
\bibliography{BDNS-CKN.bib}

\providecommand{\href}[2]{#2}
\providecommand{\arxiv}[1]{\href{http://arxiv.org/abs/#1}{arXiv:#1}}
\providecommand{\url}[1]{\texttt{#1}}
\providecommand{\urlprefix}{URL }
\begin{thebibliography}{10}

\bibitem{Aubin-76}
\newblock T.~Aubin,
\newblock Probl\`emes isop\'erim\'etriques et espaces de {S}obolev,
\newblock \emph{J. Differential Geometry}, \textbf{11} (1976), 573--598.

\bibitem{MR1124290}
\newblock G.~Bianchi and H.~Egnell,
\newblock A note on the {S}obolev inequality,
\newblock \emph{J. Funct. Anal.}, \textbf{100} (1991), 18--24,
\newblock \urlprefix\url{https://doi.org/10.1016/0022-1236(91)90099-Q}.

\bibitem{Blanchet2009}
\newblock A.~Blanchet, M.~Bonforte, J.~Dolbeault, G.~Grillo and J.~L.
  V\'{a}zquez,
\newblock Asymptotics of the fast diffusion equation via entropy estimates,
\newblock \emph{Archive for Rational Mechanics and Analysis}, \textbf{191}
  (2009), 347--385.

\bibitem{MR3579563}
\newblock M.~Bonforte, J.~Dolbeault, M.~Muratori and B.~Nazaret,
\newblock Weighted fast diffusion equations ({P}art {I}): {S}harp asymptotic
  rates without symmetry and symmetry breaking in
  {C}affarelli-{K}ohn-{N}irenberg inequalities,
\newblock \emph{Kinet. Relat. Models}, \textbf{10} (2017), 33--59,
\newblock \urlprefix\url{https://doi.org/10.3934/krm.2017002}.

\bibitem{Bonforte201761}
\newblock M.~Bonforte, J.~Dolbeault, M.~Muratori and B.~Nazaret,
\newblock Weighted fast diffusion equations ({P}art {II}): Sharp asymptotic
  rates of convergence in relative error by entropy methods,
\newblock \emph{Kinetic and Related Models}, \textbf{10} (2017), 61--91,
\newblock \urlprefix\url{https://doi.org/10.3934/krm.2017003}.

\bibitem{BDNS2021}
\newblock M.~Bonforte, J.~Dolbeault, B.~Nazaret and N.~Simonov,
\newblock Stability in {G}a\-gliar\-do-{N}irenberg-{S}obolev inequalities:
  flows, regularity and the entropy method,
\newblock Preprint
  \href{https://hal.archives-ouvertes.fr/hal-02887010}{hal-02887010} and
  \href{https://arxiv.org/abs/2007.03674}{arXiv: 2007.03674}, to appear in
  \emph{Memoirs of the AMS}.

\bibitem{Bonforte2019a}
\newblock M.~Bonforte and N.~Simonov,
\newblock Quantitative a priori estimates for fast diffusion equations with
  {C}affarelli-{K}ohn-{N}irenberg weights. {H}arnack inequalities and
  {H}\"{o}lder continuity,
\newblock \emph{Advances in Mathematics}, \textbf{345} (2019), 1075--1161.

\bibitem{bonforte2020fine}
\newblock M.~Bonforte and N.~Simonov,
\newblock Fine properties of solutions to the {C}auchy problem for a fast
  diffusion equation with {Caffarelli-Kohn-Nirenberg} weights, 2020.

\bibitem{MR790771}
\newblock H.~Brezis and E.~H. Lieb,
\newblock {S}obolev inequalities with remainder terms,
\newblock \emph{J. Funct. Anal.}, \textbf{62} (1985), 73--86,
\newblock \urlprefix\url{https://doi.org/10.1016/0022-1236(85)90020-5}.

\bibitem{Caffarelli1984}
\newblock L.~Caffarelli, R.~Kohn and L.~Nirenberg,
\newblock First order interpolation inequalities with weights,
\newblock \emph{Compositio Mathematica}, \textbf{53} (1984), 259--275.

\bibitem{Carlen2013}
\newblock E.~A. Carlen and A.~Figalli,
\newblock Stability for a {GNS} inequality and the log-{HLS} inequality, with
  application to the critical mass {K}eller{\textendash}{S}egel equation,
\newblock \emph{Duke Math. J.}, \textbf{162} (2013), 579--625.

\bibitem{Carlson1935}
\newblock F.~{Carlson},
\newblock {Une in\'egalit\'e.},
\newblock \emph{{Ark. Mat. Astron. Fys.}}, \textbf{25} (1934), 1--5.

\bibitem{Carrillo2001}
\newblock J.~A. Carrillo, A.~J\"{u}ngel, P.~A. Markowich, G.~Toscani and
  A.~Unterreiter,
\newblock Entropy dissipation methods for degenerate parabolic problems and
  generalized {S}obolev inequalities,
\newblock \emph{Monatshefte f\"{u}r Mathematik}, \textbf{133} (2001), 1--82.

\bibitem{Carrillo2019}
\newblock J.~A. Carrillo, M.~G. Delgadino, J.~Dolbeault, R.~L. Frank and
  F.~Hoffmann,
\newblock Reverse {H}ardy-{L}ittlewood-{S}obolev inequalities,
\newblock \emph{J. Math. Pures Appl. (9)}, \textbf{132} (2019), 133--165,
\newblock \urlprefix\url{https://doi.org/10.1016/j.matpur.2019.09.001}.

\bibitem{Catrina2001}
\newblock F.~Catrina and Z.-Q. Wang,
\newblock On the {C}affarelli-{K}ohn-{N}irenberg inequalities: sharp constants,
  existence (and nonexistence), and symmetry of extremal functions,
\newblock \emph{Communications on Pure and Applied Mathematics}, \textbf{54}
  (2001), 229--258.

\bibitem{Cianchi2009}
\newblock A.~Cianchi, N.~Fusco, F.~Maggi and A.~Pratelli,
\newblock The sharp {S}obolev inequality in quantitative form,
\newblock \emph{Journal of the European Mathematical Society}, 1105--1139.

\bibitem{DelPino2002}
\newblock M.~Del~Pino and J.~Dolbeault,
\newblock Best constants for {G}agliardo-{N}irenberg inequalities and
  applications to nonlinear diffusions,
\newblock \emph{Journal de Math\'{e}matiques Pures et Appliqu\'{e}es.
  Neuvi\`eme S\'{e}rie}, \textbf{81} (2002), 847--875.

\bibitem{DDFT}
\newblock M.~Del~Pino, J.~Dolbeault, S.~Filippas and A.~Tertikas,
\newblock A logarithmic {H}ardy inequality,
\newblock \emph{Journal of Functional Analysis}, \textbf{259} (2010), 2045 --
  2072,
\newblock \urlprefix\url{https://doi.org/10.1016/j.jfa.2010.06.005}.

\bibitem{DE2010}
\newblock J.~Dolbeault and M.~J. Esteban,
\newblock Extremal functions for {C}affarelli-{K}ohn-{N}irenberg and
  logarithmic {H}ardy inequalities,
\newblock \emph{Proceedings of the Royal Society of Edinburgh, Section: A
  Mathematics}, \textbf{142} (2012), 745--767,
\newblock \urlprefix\url{http://dx.doi.org/10.1017/S0308210510001101}.

\bibitem{DEL-JEPE}
\newblock J.~Dolbeault, M.~J. Esteban and M.~Loss,
\newblock Interpolation inequalities, nonlinear flows, boundary terms,
  optimality and linearization,
\newblock \emph{Journal of elliptic and parabolic equations}, \textbf{2}
  (2016), 267--295.

\bibitem{DEL-2015}
\newblock J.~Dolbeault, M.~J. Esteban and M.~Loss,
\newblock Rigidity versus symmetry breaking via nonlinear flows on cylinders
  and {E}uclidean spaces,
\newblock \emph{Invent. Math.}, \textbf{206} (2016), 397--440,
\newblock \urlprefix\url{https://dx.doi.org/10.1007/s00222-016-0656-6}.

\bibitem{1703}
\newblock J.~Dolbeault, M.~J. Esteban and M.~Loss,
\newblock Symmetry and symmetry breaking: rigidity and flows in elliptic
  {PDE}s.,
\newblock \emph{Proc. Int. Cong. of Math. 2018, Rio de Janeiro}, \textbf{3}
  (2018), 2279--2304.

\bibitem{Dolbeault2017}
\newblock J.~Dolbeault, M.~J. Esteban, M.~Loss and M.~Muratori,
\newblock Symmetry for extremal functions in subcritical
  {C}affarelli-{K}ohn-{N}irenberg inequalities,
\newblock \emph{Comptes Rendus Math\'{e}matique. Acad\'{e}mie des Sciences.
  Paris}, \textbf{355} (2017), 133--154.

\bibitem{dolbeault2022parabolic}
\newblock J.~Dolbeault and A.~Zhang,
\newblock Parabolic methods for ultraspherical interpolation inequalities,
\newblock Preprint
  \href{https://hal.archives-ouvertes.fr/hal-03573888}{hal-03573888} and
  \href{https://arxiv.org/abs/2202.07041}{arXiv: 2202.07041}, to appear in
  \emph{Discrete and Continuous Dynamical Systems}, 2022.

\bibitem{Felli2003}
\newblock V.~Felli and M.~Schneider,
\newblock Perturbation results of critical elliptic equations of
  {C}affarelli-{K}ohn-{N}irenberg type,
\newblock \emph{Journal of Differential Equations}, \textbf{191} (2003),
  121--142.

\bibitem{figalli2020sharp}
\newblock A.~Figalli and Y.~R.-Y. Zhang,
\newblock Sharp gradient stability for the {S}obolev inequality,
\newblock Preprint \href{https://arxiv.org/abs/2003.04037}{arXiv: 2003.04037},
  to appear in \emph{Duke Math. J.}, 2022.

\bibitem{Gunson91}
\newblock J.~Gunson,
\newblock \emph{Inequalities (Birmingham, 1987)}, vol. 129 of Lecture Notes in
  Pure and Appl. Math., chapter Inequalities in mathematical physics, 53--79,
\newblock Dekker, New York, 1991.

\bibitem{Ilyin}
\newblock V.~P. Il'in,
\newblock Some integral inequalities and their applications in the theory of
  differentiable functions of several variables,
\newblock \emph{Mat. Sb. (N.S.)}, \textbf{54 (96)} (1961), 331--380.

\bibitem{Levin48}
\newblock V.~I. Levin,
\newblock Exact constants in inequalities of the {C}arlson type,
\newblock \emph{Doklady Akad. Nauk SSSR (N.S.)}, \textbf{59} (1948), 635--638.

\bibitem{rodemich1966sobolev}
\newblock E.~Rodemich,
\newblock The {S}obolev inequalities with best possible constants,
\newblock in \emph{Analysis seminar at California Institute of technology},
  1966,
\newblock 1--25.

\bibitem{MR3695890}
\newblock F.~Seuffert,
\newblock An extension of the {B}ianchi-{E}gnell stability estimate to {B}akry,
  {G}entil, and {L}edoux's generalization of the {S}obolev inequality to
  continuous dimensions,
\newblock \emph{J. Funct. Anal.}, \textbf{273} (2017), 3094--3149,
\newblock \urlprefix\url{https://doi.org/10.1016/j.jfa.2017.07.001}.

\bibitem{Simonov2020}
\newblock N.~Simonov,
\newblock \emph{Fast diffusion equations with {Caffarelli-Kohn-Nirenberg}
  weights: regularity and asymptotics},
\newblock PhD thesis, Universidad Aut{\'o}noma de Madrid, 2020,
\newblock \urlprefix\url{http://hdl.handle.net/10486/691878}.

\bibitem{Talenti-76}
\newblock G.~Talenti,
\newblock Best constant in {S}obolev inequality,
\newblock \emph{Ann. Mat. Pura Appl. (4)}, \textbf{110} (1976), 353--372.

\bibitem{Vazquez2003}
\newblock J.~L. V{\'{a}}zquez,
\newblock Asymptotic behaviour for the porous medium equation posed in the
  whole space,
\newblock in \emph{Nonlinear Evolution Equations and Related Topics},
\newblock Birkh{\"a}user Basel, 2003,
\newblock 67--118,
\newblock \urlprefix\url{https://doi.org/10.1007%2F978-3-0348-7924-8_5}.

\bibitem{MR2282669}
\newblock J.~L. V\'azquez,
\newblock \emph{Smoothing and decay estimates for nonlinear diffusion
  equations}, vol.~33 of Oxford Lecture Series in Mathematics and its
  Applications,
\newblock Oxford University Press, Oxford, 2006,
\newblock
  \urlprefix\url{https://doi.org/10.1093/acprof:oso/9780199202973.001.0001},
\newblock Equations of porous medium type.

\bibitem{wei2021stability}
\newblock J.~Wei and Y.~Wu,
\newblock Stability of {C}affarelli-{K}ohn-{N}irenberg inequality,
\newblock \emph{Mathematische Annalen},
\newblock \urlprefix\url{https://doi.org/10.1007/s00208-021-02325-0}.

\end{thebibliography}
\begin{flushright}\emph{\small\today}\end{flushright}\bigskip
\end{document}